\newcommand\Tau{\scalerel*{\tau}{T}}
\newtheorem{theorem}{Theorem}[section]
\newtheorem{corollary}{Corollary}[section]
\newtheorem{definition}{Definition}[section]
\newtheorem{lemma}{Lemma}[section]
\title[]{\textbf{\lowercase{$k^{th}$} order Slant Hankel Operators on the Polydisk}}
\author[]{ M. P. Singh and Oinam Nilbir Singh,\\[1mm] Department of Mathematics,Manipur University, Canchipur,795003\\E\lowercase{mail: mpremjitmu@gmail.com}(M.P. S\lowercase{ingh) and\\ nilbirkhuman@manipuruniv.ac.in}(O.N. S\lowercase{ingh)}}
\begin{document}

\maketitle
\begin{abstract}
In this paper, we initiate the notion of $ k^{th} $ order  slant Hankel operators on $ L^2(\mathbb{T}^n) $ for $ k \geq 2$ and $ n \geq 1 $  where $ \mathbb{T}^n $ denotes the $ n $-torus. We give the necessary and sufficient condition for a bounded operator on $ L^2(\mathbb{T}^n) $ to be a $ k^{th} $ order slant Hankel and discuss their commutative, compactness, hyponormal and isometric property. 
\end{abstract}

Keyword: $k^{th}$ order slant Hankel operator, $k^{th}$ order slant Toeplitz operator, bounded function, multiplication operator, hyponormal, isometric.\\
MSC[2010]: 47B35, 47B38

\section{Introduction}
Throughout this paper, $ k $ is a fixed positive integer and $ k \geq 2 $. Suppose $\mathbb{D}$ is  the open unit disk  and $\mathbb{T}$ denotes the unit circle in the complex plane $\mathbb{C}$. For $n\geq 1$, let $\mathbb{D}^n$ and  $\mathbb {T}^n$  are respectively the polydisk in $\mathbb{C}^n$ and $n$-torus having a distinct boundary of $\mathbb{D}^n$. In this paper, $ z \in\mathbb{C}^n $ is used to denote $z = (z_1,z_2,\cdots ,z_n) $ and  $r =(r_1,r_2,\ldots,r_n) \;\; \forall r \in \mathbb{Z}^n $.
Let $ \phi(z) = \displaystyle \sum_{ r \in \mathbb{Z}^n} a_r z^r $, where $ a_r = a_{r_1,r_2,\cdots,r_n} $ and $ z^r = z_1^{r_1}\cdots \; z_n^{r_n} $  be a bounded
measurable function on the class of all essentially
bounded measurable functions on $ \mathbb{T}^n $,  $ L^\infty (\mathbb{T}^n)$. Then, the $i^{th}$ Fourier coefficient of $  \phi $ is given by $ \langle \phi, e_r(z) \rangle $ where $  e_r(z) = z^r $ is an orthonormal basis for $ {L}^2 (\mathbb{T}^n) $.
Let $ \varepsilon_{j} =(x_1,\cdots ,x_n) $ where $ {x}_\beta= \delta_{\beta \,j} $ jor $j = 1,2, \cdots,n$.
Let $ d \sigma$ be the Haar measure on $\mathbb{T}^n$.
The space $ {L}^2(\mathbb{T}^n)$ 
is defined as
\[ \displaystyle {L}^2(\mathbb{T}^n) = \bigg\lbrace \; {g} :\mathbb{T}^n \mapsto \mathbb{C} \;\, \vert \;\, {g(z)} =  \sum_{r \in \mathbb{Z}^n} {f}_r\,z^r,\;\sum_{r \in \mathbb{Z}^n} \vert \;{{f_r}} \;\vert^2 < \infty  \bigg\rbrace
 \]
where $ \mathbb{Z} $ 
denotes the set of integers.
The space $L^2(\mathbb{T}^n)$ is a Hilbert space with the norm given by the inner product $\displaystyle\langle g, \; h \rangle = \int_{\mathbb{T}^n} {g(z) \overline{h(z)}} d\sigma(z)$.

In the year $1911$, Toeplitz \cite{to} introduced the notion of Toeplitz operators. Hankel operators are the formal companions of Toeplitz operators. With the introduction of the class of Hankel operators in the middle of the
twentieth century, it has assumed tremendous importance due to its far reaching applications to problems of rational approximation, information and control theory, interpolation and prediction problems etc\cite{po}. 
 A finite matrix or a singly infinite matrix or a doubly infinite matrix  is a Hankel matrix  if its entries are constant along each skew diagonal. That is, the matrix $ (a_{m,m^{\prime}})$, for $ m $,$ m^{{\prime}}\in{\mathbb{Z}^n} $ is Hankel if $ a_{m_1,{m^{{\prime}}_1}}= a_{m_2,{m^{{\prime}}_2}}$
whenever ${m_1+{m^{{\prime}}_1}}= {m_2 +{m^{{\prime}}_2}}.$
The notion of Hankel operators have been generalized by many mathematicians. Avendano \cite{ra} initiated the notion
of $\lambda$-Hankel operators in the year 2000. Later in the year 2002, motivated by Barria
and Halmos's work, Avendano \cite{ra1} brought the concept of essentially
Hankel operators into the picture.  S.C. Arora \cite{ar} and his research associates introduced the class of slant Hankel operators in 2006. Datt and Porwal \cite{dp} introduced the concept on generalization of weighted slant Hankel operators in 2016. Hazarika and Marik \cite{ha} initiated the idea of Toeplitz and slant Toeplitz operators in the polydisk and discussed several properties in 2019. Recently, Datt and Pandey \cite{ds} discussed the Slant Toeplitz operators on the Lebesgue space of n-dimensional Torus.  For adequate literature on Toeplitz, slant Toeplitz, and the concepts of polydisk, one is referred to \cite{ru, Ru, ke}.
Motivated by the works of Ho \cite{ho, Ho}, in this paper,  we introduce the $ k^{th} $ order slant Hankel operator on $ L^2 (\mathbb{T}^n )$. We show in this paper that a bounded linear operator $  \mathcal{S}^{k,n} $ on $ {L}^2(\mathbb{T}^n) $ is a  $ k^{th} $ order slant Hankel operator of level $ n $ iff $  \mathcal{S}^{k,n} $ can be expressed as  $ k^{th} $ order slant Hankel matrix of level $ n $.
We also study  commutative, compactness, hyponormal and isometric property of the operator.  

\section{Preliminaries}

\begin{definition}\rm \cite{ds}
Let $ \phi \in L^{\infty}(\mathbb{T}^n)$ and $k$ be an integer $ \geq 2$. A $ k^{th} $ order slant Toeplitz operator $ A_\phi^{k,n} $ on the Lebesgue space $ L^{2}(\mathbb{T}^n)$ is defined as $  A_\phi^{k,n} = W_k^n M_{\phi} $, where $ M_{\phi} $ is the Laurent operator on  $ L^{2}(\mathbb{T}^n)$ induced by $ \phi $ and $ W_k^n $ is the linear operator on $ L^{2}(\mathbb{T}^n)$  given by 
\[
 W_k^n \; e_m =
\begin{cases}
 e_{\frac{m}{k}} & \mbox {if} \; k\mid m ; \\
 0 & {\rm otherwise},\; \forall \; m \; \in \; \mathbb{Z}^n.
 
 \end{cases}
\]

\end{definition}

Let $ \lbrace a_m \rbrace _{m \,\in\,\mathbb{Z}^n}$ be  a sequence of scalars. A matrix of the expression
\[ \mathcal{S}_{m_2,\cdots,m_n}^{k,1} = \begin{pmatrix}
\ddots & \vdots & \vdots & \vdots  & \vdots\\
\ldots & a_{(0,m_2,\cdots,m_n)} & a_{(-1,m_2,\cdots ,m_n)} & a_{(-2,m_2,\cdots,m_n)} & \cdots \\
\ldots & a_{(-2k,m_2,\cdots,m_n)} & a_{(-2k-1,m_2,\cdots,m_n)} & a_{(-2k-2,m_2,\cdots,m_n)} & \cdots \\
\ldots & a_{(-4k,m_2,\cdots,m_n)} & a_{(-4k-1,m_2,\cdots,m_n)} & a_{(-4k-2,m_2,\cdots,m_n)} & \cdots \\
\vdots & \vdots & \vdots & \vdots & \ddots
\end{pmatrix}\]
is called a $ k^{th} $ order slant Hankel matrix of level $ 1 $.

A block matrix  of the form
\[ \mathcal{S}_{m_3,...,m_n}^{k,2} = \begin{pmatrix}
\ddots & \vdots & \vdots & \vdots  & \vdots\\
\ldots & \mathcal{S}_{0,m_3,\cdots,m_n}^{1,k} & \mathcal{S}_{-1,m_3,\cdots,m_n}^{1,k } & \mathcal{S}_{-2,m_3,\cdots ,m_n}^{1, k} & \cdots \\
\ldots & \mathcal{S}_{-2k,m_3,\cdots,m_n}^{1,k} & \mathcal{S}_{-2k-1,m_3,\cdots,m_n}^{1,k} & \mathcal{S}_{-2k-2,m_3,\cdots,m_n}^{1,k} & \cdots \\
\ldots & \mathcal{S}_{-4k,m_3,\cdots,m_n}^{1,k} & \mathcal{S}_{-4k-1,m_3,\cdots,m_n}^{1,k} & \mathcal{S}_{-4k-2,m_3,...,m_n}^{1,k} & \cdots \\
\vdots & \vdots & \vdots & \vdots & \ddots
\end{pmatrix}\]
is called  a $ k^{th} $ order slant Hankel matrix of level $ 2 $. Following the pattern, the $ k^{th} $ order slant Hankel matrix of level $ n $ is define as   
\[ \mathcal{S}^{k,n} = \begin{pmatrix}
\ddots & \vdots & \vdots & \vdots  & \vdots\\
\ldots & \mathcal{S}_{0}^{k,n-1} & \mathcal{S}_{-1}^{k,n-1} & \mathcal{S}_{-2}^{k,n-1} & \cdots \\
\ldots & \mathcal{S}_{-2k}^{k,n-1} & \mathcal{S}_{-2k-1}^{k,n-1} & \mathcal{S}_{-2k-2}^{k,n-1} & \cdots \\
\ldots & \mathcal{S}_{-4k}^{k,n-1} & \mathcal{S}_{-4k-1}^{k,n-1} & \mathcal{S}_{-4k-2}^{k,n-1} & \cdots \\
\vdots & \vdots & \vdots & \vdots & \ddots
\end{pmatrix}.\]

\begin{definition}\label{mydefn7}
For  an integer $k$ $ \geq 2$, a slant Hankel operator $ \mathcal{S}_\phi^{k,n }$ of $ k^{th} $ order which is induced by $ \phi \in L^{\infty}(\mathbb{T}^n)$ on the Lebesgue space $ L^{2}(\mathbb{T}^n)$ is given by $ \mathcal{S}_\phi^{k,n } = \mathcal{V}_k^n M_{\phi} $, where $ M_{\phi} $ is the Multiplication operator on  $ L^{2}(\mathbb{T}^n)$ and $\mathcal{V}_k^n $  is a linear operator defined on $ {L}^2(\mathbb{T}^n)$ by 
\[
 \mathcal{V}_k^n\; e_m =
\begin{cases}
 e_{\frac{-m}{k}} & \mbox {if} \; k \mid m ; \\
 0 & { \rm otherwise},\forall \; m \; \in \; \mathbb{Z}^n.
 
 \end{cases}
\]
\end{definition}
Let $ \displaystyle h(z) = \displaystyle \sum_{ m \in\mathbb{Z}^n } u_m  z^m  \in{L}^2(\mathbb{T}^n) $, then  $ \mathcal{V}_k^n \, h(z) = \displaystyle\sum_{ m\in\mathbb{Z}^n } u_{km}  z^{-m} $. 
Further, the adjoint of $ \mathcal{V}_k^n $ is given by  $ (\mathcal{V}_k^n)^{\ast} \; e_m = e_{-km} $ for each $ m\in\mathbb{Z}^n $. 
So, $ (\mathcal{V}_k^n)^{\ast} \, h(z) =  \displaystyle\sum_ { k \in\mathbb{Z}^n } u_{k}  z^{-km}$. Then $ \mathcal{V}_k^n \,(\mathcal{V}_k^n)^\ast = I $ and $ (\mathcal{V}_k^n)^\ast\,\mathcal{V}_k^n = {P}_e $, where $ {P}_e $ is the projection on  the closed  span of $ \lbrace e_{km} \;\; : \; m \in\mathbb{Z}^n \rbrace $ in $ {L}^2(\mathbb{T}^n) $ . Thus $ \mathcal{V}_k^n $ is the  co-isometry on $ L^2(\mathbb{T}^n) $ and isometry on $ {P}_e({L}^2(\mathbb{T}^n)) $.
In fact, $ \Vert \mathcal{V}_k^n  \Vert = 1 $. Let $\mathcal{B}(L^2(\mathbb{T}^n))$ be the algebra of all bounded linear operators on $ L^2(\mathbb{T}^n) $ and $ \Omega :L^\infty(\mathbb{T}^n)\mapsto \mathcal{B}(L^2(\mathbb{T}^n))$  be defined as $ \Omega =\mathcal{S}_\phi^{k,n} $ then $  \Omega $  is linear and $1-1$.

\begin{lemma}\label{lemma01}
A bounded linear operator $S$ on $ L^2(\mathbb{T}^n) $ is a $ k^{th} $ order slant Hankel operator iff  $ M_{z_j}S = SM_{z_j^{-k}} $.
\end{lemma}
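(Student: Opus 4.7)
The plan is to prove the two implications separately via direct computation on the orthonormal basis $\{e_m\}_{m \in \mathbb{Z}^n}$, reading the hypothesis as imposed for every $j = 1,\dots,n$.

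For the forward direction, I would first verify the single intertwining $M_{z_j}\mathcal{V}_k^n = \mathcal{V}_k^n M_{z_j^{-k}}$: on $e_m$, both sides equal $e_{-m/k + \varepsilon_j}$ when $k \mid m$ and $0$ otherwise, the key point being $k \mid m \iff k \mid m - k\varepsilon_j$. Composing with $M_\phi$ and using that coordinate multiplication operators commute among themselves, this propagates to
$M_{z_j}\mathcal{S}_\phi^{k,n} = M_{z_j}\mathcal{V}_k^n M_\phi = \mathcal{V}_k^n M_{z_j^{-k}}M_\phi = \mathcal{V}_k^n M_\phi M_{z_j^{-k}} = \mathcal{S}_\phi^{k,n} M_{z_j^{-k}}$.

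For the converse, I would pass to matrix entries $a_{s,r} := \langle S e_r, e_s\rangle$. Using $M_{z_j}^{\ast} = M_{z_j^{-1}}$ on $L^2(\mathbb{T}^n)$, the hypothesis translates to $a_{s-\varepsilon_j,\,r} = a_{s,\,r - k\varepsilon_j}$, so the substitution $(s,r)\mapsto(s-\varepsilon_j,\,r+k\varepsilon_j)$ leaves $a_{s,r}$ invariant. Iterating in each coordinate, $a_{s,r}$ depends on $(s,r)$ only through the combination $r + ks$, so there is a scalar sequence $(c_u)_{u \in \mathbb{Z}^n}$ with $a_{s,r} = c_{r+ks}$. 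Setting $s=0$ identifies $c_u = \langle Se_u, e_0\rangle$, whence $\sum_u |c_u|^2 = \|S^{\ast}e_0\|^2 \leq \|S\|^2 < \infty$, and the formal symbol $\phi(z) := \sum_p c_{-p}\,z^p$ already belongs to $L^2(\mathbb{T}^n)$. A direct computation then gives $\langle \mathcal{V}_k^n M_\phi e_r, e_s\rangle = \langle \phi e_r, (\mathcal{V}_k^n)^{\ast}e_s\rangle = \langle \phi, e_{-ks-r}\rangle = c_{ks+r} = a_{s,r}$, so $S$ and $\mathcal{V}_k^n M_\phi$ agree on the dense subspace of trigonometric polynomials, and the resulting matrix is precisely a $k^{th}$ order slant Hankel matrix of level $n$ in the template of Section 2.

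The step I expect to be the main obstacle is upgrading this formal $\phi$ from $L^2(\mathbb{T}^n)$ to $L^\infty(\mathbb{T}^n)$, which is what Definition \ref{mydefn7} requires in order to declare $S$ a slant Hankel operator. My plan is to exploit the orthogonal splitting $L^2(\mathbb{T}^n) = \bigoplus_{r \in \{0,\dots,k-1\}^n} z^r P_e L^2(\mathbb{T}^n)$, on which $\mathcal{V}_k^n$ acts as an isometry block by block in view of $\mathcal{V}_k^n(\mathcal{V}_k^n)^{\ast} = I$ and $(\mathcal{V}_k^n)^{\ast}\mathcal{V}_k^n = P_e$, combined with the hypothesis commutation, to show that multiplication by the formal $\phi$ extends boundedly on trigonometric polynomials with norm controlled by $\|S\|$. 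The standard characterization $\phi \in L^\infty(\mathbb{T}^n) \iff M_\phi \in \mathcal{B}(L^2(\mathbb{T}^n))$ then closes the argument and yields $S = \mathcal{S}_\phi^{k,n}$.
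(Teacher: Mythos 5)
The paper states Lemma \ref{lemma01} without any proof (it is invoked as a black box, e.g.\ in the converse half of Theorem \ref{mythm8}), so there is no argument of the authors' to compare yours against; your attempt supplies what the paper omits. Your forward direction is correct: the basis computation verifying $M_{z_j}\mathcal{V}_k^n=\mathcal{V}_k^nM_{z_j^{-k}}$ via $k\mid m\Leftrightarrow k\mid m-k\varepsilon_j$ is exactly the right reduction, and composing with $M_\phi$ is immediate. Your converse is also structurally sound: the translation of the hypothesis into $a_{s-\varepsilon_j,r}=a_{s,r-k\varepsilon_j}$, the orbit argument showing $a_{s,r}=c_{r+ks}$, the square-summability of $(c_u)$ via $\|S^*e_0\|$, and the verification that $S$ agrees with $\mathcal{V}_k^nM_\phi$ on trigonometric polynomials are all correct (and, incidentally, recover the relation $a_{m'+\varepsilon_j,\,m-k\varepsilon_j}=a_{m',m}$ that the paper's Lemma \ref{lemma3} and matrix template are built on).

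The one place your write-up stops short of a proof is the step you yourself flag: upgrading $\phi$ from $L^2(\mathbb{T}^n)$ to $L^\infty(\mathbb{T}^n)$, which Definition \ref{mydefn7} genuinely requires. As written this is a plan, not an argument, so the proof is incomplete until it is carried out --- but the plan does close, and more easily than you seem to anticipate. Since $\sum_{t\in\{0,\dots,k-1\}^n}M_{z^{-t}}P_eM_{z^t}=I$ (each $e_r$ survives exactly one projection, for the unique $t$ with $k\mid r+t$) and $P_e=(\mathcal{V}_k^n)^*\mathcal{V}_k^n$, one has, on any trigonometric polynomial $f$,
\begin{equation*}
\phi f \;=\; \sum_{t\in\{0,\dots,k-1\}^n} z^{-t}\,P_e\bigl(z^{t}\phi f\bigr)\;=\;\sum_{t\in\{0,\dots,k-1\}^n} M_{z^{-t}}(\mathcal{V}_k^n)^*\,S\,M_{z^{t}}f,
\end{equation*}
using that $S=\mathcal{V}_k^nM_\phi$ on polynomials. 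Each summand has norm at most $\|S\|\,\|f\|$ because $M_{z^{\pm t}}$ is unitary and $(\mathcal{V}_k^n)^*$ is an isometry, so $\|\phi f\|_2\le k^n\|S\|\,\|f\|_2$ on a dense set; hence $M_\phi$ is bounded and $\phi\in L^\infty(\mathbb{T}^n)$ with $\|\phi\|_\infty\le k^n\|S\|$, completing $S=\mathcal{S}_\phi^{k,n}$. I would also make explicit that the intertwining hypothesis is assumed for every $j=1,\dots,n$ (the lemma's statement leaves $j$ unquantified), since your orbit argument uses all $n$ coordinates.
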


\begin{lemma}\label{lemma1}
$ M_{z_j}S = SM_{z_j^{-k}} $ iff $ M_{z^m} S = S M_{z^{-km}} \; \forall 1 \leq j\leq n \; and\;  m \in \mathbb{Z}^n$ for  a bounded linear operator $ S $ on $ L^2(\mathbb{T}^n) $.
\end{lemma}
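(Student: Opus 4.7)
The plan is to dispatch the easy direction by specialisation, then prove the substantive direction by bootstrapping in two stages: first extending the given single-variable relation to arbitrary integer exponents in one coordinate, and then combining coordinates via the mutual commutativity of the multiplication operators.

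For the ``$\Leftarrow$'' direction, taking $m = \varepsilon_j$ in $M_{z^m}S = SM_{z^{-km}}$ immediately gives $M_{z_j}S = SM_{z_j^{-k}}$, since $z^{\varepsilon_j} = z_j$ and $z^{-k\varepsilon_j} = z_j^{-k}$. So that half is instantaneous.

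For the forward direction, I would fix $j$ and use that $M_{z_j}$ is a unitary with inverse $M_{z_j^{-1}}$ and that $\{M_{z_j^p}\}_{p \in \mathbb{Z}}$ is a commuting family. Multiplying the hypothesis $M_{z_j}S = SM_{z_j^{-k}}$ on the left by $M_{z_j^{-1}}$ and on the right by $M_{z_j^{k}}$ yields $M_{z_j^{-1}}S = SM_{z_j^k}$, which is the same relation with exponent $+1$ replaced by $-1$. A straightforward induction on $|p|$ then gives
\[ M_{z_j^p} S = S M_{z_j^{-kp}} \qquad \text{for every } p \in \mathbb{Z} \text{ and every } 1 \le j \le n. \]

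Finally, for $m = (m_1,\ldots,m_n) \in \mathbb{Z}^n$, I would factor $M_{z^m} = M_{z_1^{m_1}} \cdots M_{z_n^{m_n}}$, which is legitimate since multiplication operators in distinct variables commute, and then push $S$ across the $n$ factors one at a time using the single-variable identity just established. Each factor $M_{z_j^{m_j}}$ converts into $M_{z_j^{-km_j}}$ as it crosses $S$, and the factors already parked to the right of $S$ commute with everything still on the left, so no reordering issue arises; one arrives at $M_{z^m}S = SM_{z^{-km}}$. I do not anticipate any genuine obstacle: the whole argument is bookkeeping with commuting unitaries, and the only place to tread carefully is the inversion step, which uses unitarity of $M_{z_j}$ essentially.
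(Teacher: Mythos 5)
Your proof is correct and follows essentially the same route as the paper: the ``$\Leftarrow$'' direction by specialising to $m=\varepsilon_j$, and the forward direction by factoring $M_{z^m}=M_{z_1^{m_1}}\cdots M_{z_n^{m_n}}$ and pushing $S$ across one coordinate at a time. You are in fact more careful than the paper, which skips the induction extending the exponent-one relation to arbitrary integer exponents (including the negative ones, where your use of the unitarity of $M_{z_j}$ is exactly the needed step).
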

\begin{proof}
Let $ M_{z^m} S = S M_{z^{-km}} \forall m \in \mathbb{Z}^n$. Taking $ m = \epsilon_j \;{\rm for} 1 \leq j\leq n  $ implies $ M_{z_j}S = SM_{z_j^{-k}} $. For the converse  part consider $ m = (m_1, m_2, \cdots, m_n)$, then \[ M_{z^m} S = S M_{z_1^{m_1}, \cdots, z_n^{m_n}} S  = S M_{z^{-km}} \].
\end{proof} 
\begin{lemma}\label{lemma2}
  A bounded linear operator $\mathcal{S}_\phi^{k,n } $ is a $ k ^{th}$  order slant Hankel operator of level $n$ on $ L^2(\mathbb{T}^n) $  if and only if $ M_{z^m}\mathcal{S}_\phi^{k,n } = \mathcal{S}_\phi^{k,n } M_{z^{-km}} \forall m \in \mathbb{Z}^n$. 
\end{lemma}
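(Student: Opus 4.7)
The plan is to obtain this lemma as a direct composition of the two preceding lemmas rather than to reprove the characterization from scratch. Lemma~\ref{lemma01} already characterizes the $k^{th}$ order slant Hankel property through the single-coordinate intertwining relations $M_{z_j}S = SM_{z_j^{-k}}$ for $1\leq j\leq n$, and Lemma~\ref{lemma1} upgrades those single-coordinate relations to the full multi-index relation $M_{z^m}S = SM_{z^{-km}}$ for every $m\in\mathbb{Z}^n$. So the strategy is simply to chain the two biconditionals.

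For the forward direction I would assume $\mathcal{S}_\phi^{k,n}$ is a $k^{th}$ order slant Hankel operator of level $n$, apply Lemma~\ref{lemma01} to obtain $M_{z_j}\mathcal{S}_\phi^{k,n} = \mathcal{S}_\phi^{k,n}M_{z_j^{-k}}$ for every $j$, and then invoke Lemma~\ref{lemma1} to conclude that $M_{z^m}\mathcal{S}_\phi^{k,n} = \mathcal{S}_\phi^{k,n}M_{z^{-km}}$ for all $m \in \mathbb{Z}^n$. For the converse I would specialize the hypothesis $M_{z^m}\mathcal{S}_\phi^{k,n} = \mathcal{S}_\phi^{k,n}M_{z^{-km}}$ to the standard basis vectors $m=\varepsilon_j$ (so that $z^m = z_j$ and $z^{-km}=z_j^{-k}$), which yields the single-coordinate intertwining identities, and then appeal again to Lemma~\ref{lemma01} to deduce that $\mathcal{S}_\phi^{k,n}$ is a $k^{th}$ order slant Hankel operator of level $n$.

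There is no real obstacle: the statement is essentially the concatenation of Lemmas~\ref{lemma01} and~\ref{lemma1}, so the only minor point to check is notational consistency, namely that the $m \in \mathbb{Z}^n$ appearing in Lemma~\ref{lemma1} matches the indexing used in Definition~\ref{mydefn7} and that specializing to $m=\varepsilon_j$ indeed recovers the single-variable identity. Both are immediate from the conventions fixed in the introduction.
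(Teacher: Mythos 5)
Your argument is correct and coincides with what the paper intends: the paper states this lemma without proof precisely because it is the immediate concatenation of Lemma~\ref{lemma01} and Lemma~\ref{lemma1}, which is exactly the chain of biconditionals you spell out. No gaps beyond those already present in the two cited lemmas themselves.
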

\begin{theorem}\label{thm1}
If $ \mathcal{S}_\phi^{k,n } $  is a $ k ^{th}$  order slant Hankel operator on $ L^2(\mathbb{T}^n) $ then $ M_\phi \mathcal{S}_\psi^{k,n }$ is a  $ k ^{th}$  order slant Hankel operator and $  M_\phi \mathcal{S}_\psi^{k,n } = \mathcal{S}_{\phi(z^{-k}) \psi_k(z)}^{k,n} .$ 
\end{theorem}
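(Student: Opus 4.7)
The plan is to verify the conclusion in two stages: first confirm that $M_\phi\mathcal{S}_\psi^{k,n}$ satisfies the characterizing intertwining relation of Lemma~\ref{lemma01}, and then identify its symbol by pushing $M_\phi$ across $\mathcal{V}_k^n$.

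For the first stage, Lemma~\ref{lemma01} asserts that a bounded operator $S$ on $L^2(\mathbb{T}^n)$ is a $k^{th}$ order slant Hankel operator iff $M_{z_j}S = SM_{z_j^{-k}}$ for every $1\le j\le n$. Since multiplication operators commute, $M_\phi M_{z_j}=M_{z_j}M_\phi$, while $\mathcal{S}_\psi^{k,n}$ already satisfies $M_{z_j}\mathcal{S}_\psi^{k,n}=\mathcal{S}_\psi^{k,n}M_{z_j^{-k}}$. Composing these,
\[
 M_{z_j}\bigl(M_\phi\mathcal{S}_\psi^{k,n}\bigr) = M_\phi M_{z_j}\mathcal{S}_\psi^{k,n} = M_\phi \mathcal{S}_\psi^{k,n} M_{z_j^{-k}} = \bigl(M_\phi\mathcal{S}_\psi^{k,n}\bigr) M_{z_j^{-k}},
\]
so by Lemma~\ref{lemma01} the operator $M_\phi\mathcal{S}_\psi^{k,n}$ is itself a $k^{th}$ order slant Hankel operator.

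For the second stage, I would prove the commutation identity
\[
 M_\phi \mathcal{V}_k^n = \mathcal{V}_k^n M_{\phi(z^{-k})},
\]
where $\phi(z^{-k})=\phi(z_1^{-k},\ldots,z_n^{-k})$. Expanding $\phi=\sum_{r\in\mathbb{Z}^n} a_r z^r$ and testing on the orthonormal basis $\{e_m\}$: if $k\nmid m$, both sides vanish, while if $m=kl$ with $l\in\mathbb{Z}^n$, the left side produces $\phi(z)\,z^{-l}=\sum_r a_r z^{r-l}$, and the right side first produces $\sum_r a_r z^{-kr+kl}$, after which $\mathcal{V}_k^n$ retains every term (all exponents lie in $k\mathbb{Z}^n$) and divides each exponent by $-k$, giving the same $\sum_r a_r z^{r-l}$. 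Combining this with the factorization $\mathcal{S}_\eta^{k,n}=\mathcal{V}_k^n M_\eta$ from Definition~\ref{mydefn7},
\[
 M_\phi\mathcal{S}_\psi^{k,n} = M_\phi\mathcal{V}_k^n M_\psi = \mathcal{V}_k^n M_{\phi(z^{-k})}M_\psi = \mathcal{V}_k^n M_{\phi(z^{-k})\psi} = \mathcal{S}_{\phi(z^{-k})\psi}^{k,n},
\]
which is precisely the stated symbol (reading $\psi_k(z)$ in the statement as $\psi(z)$).

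The main obstacle is the bookkeeping in the identity $M_\phi\mathcal{V}_k^n=\mathcal{V}_k^n M_{\phi(z^{-k})}$: after the Fourier expansion of $\phi(z^{-k})$ is multiplied against $z^m$, the divisibility filter imposed by $\mathcal{V}_k^n$ and the subsequent division of exponents by $-k$ must realign to recover the symbol $\phi(z)$ evaluated at $z^{-m/k}$. Once this identity is secured, the slant Hankel character of $M_\phi\mathcal{S}_\psi^{k,n}$ follows immediately from Lemma~\ref{lemma01}, and the symbol formula reduces to associativity of operator composition together with the homomorphism property $M_\alpha M_\beta = M_{\alpha\beta}$ of multiplication operators.
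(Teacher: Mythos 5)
Your proof is correct and follows essentially the same route as the paper's: establish the intertwining relation $M_{z_j}(M_\phi\mathcal{S}_\psi^{k,n}) = (M_\phi\mathcal{S}_\psi^{k,n})M_{z_j^{-k}}$ via Lemma~\ref{lemma01}, then commute $M_\phi$ past $\mathcal{V}_k^n$ to read off the symbol $\phi(z^{-k})\psi(z)$. The only (harmless) difference is that you verify the key identity $M_\phi\mathcal{V}_k^n=\mathcal{V}_k^n M_{\phi(z^{-k})}$ by direct computation on the basis $\{e_m\}$, whereas the paper obtains the equivalent relation $M_\phi\mathcal{S}_\psi^{k,n}=\mathcal{S}_\psi^{k,n}M_{\phi(z^{-k})}$ by extending the monomial intertwining relations linearly over $L^\infty(\mathbb{T}^n)$; your version makes that extension step explicit rather than implicit.
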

\begin{proof}
$ M_{z^m} (M_\phi\mathcal{S}_\psi^{k,n})= M_\phi   M_{z^m}\mathcal{S}_\psi^{k,n} = (M_\phi \mathcal{S}_\psi^{k,n})M_{z^{-km}} \;\forall m \in \mathbb{Z}^n.$ Therefore by Lemma\;{\ref{lemma2}} $ M_\phi\mathcal{S}_\psi^{k,n} $ is a $ k ^{th}$  order slant Hankel operator. $ \mathcal{S}_\psi^{k,n} $ is a  $ k ^{th}$  order slant Hankel operator on $ L^2(\mathbb{T}^n) $ implies $ M_{z^m}\mathcal{S}_\psi^{k,n} = \mathcal{S}_\psi^{k,n} M_{z^{-km}} \;\forall m \in \mathbb{Z}^n $. This in turn implies $ M_{\phi(z)}\mathcal{S}_\psi^{k,n} = \mathcal{S}_\psi{k,n} M_{\phi(z^{-k})} \;\forall \phi \in L^\infty(\mathbb{T}^n).$ Now, $ M_\phi \mathcal{S}_\psi^{k,n} = \mathcal{S}_\psi^{k,n} M_{\phi(z^{-k})} = \mathcal{V}_k^n M_\psi M_{\phi(z^{-k})} = \mathcal{V}_k^n M_{\psi(z) \phi(z^{-k})} = \mathcal{S}_{\psi(z) \phi(z^{-k})}^{k,n }.$
\end{proof}
\begin{theorem}\label{thm2}
$  \mathcal{S}_\phi^{k,n} M_\psi = M_\psi \mathcal{S}_\phi^{k,n} $ iff   $ \phi(z) \psi(z) = \phi(z) \psi(z^{-k})\; \forall  \; z \in \mathbb{T}^n $ and $ \psi(z)= constant $ iff $  \ \mathcal{S}_\phi^{k,n} M_\psi = M_\psi \mathcal{S}_\phi^{k,n} $ for a particular $ \phi $ which is invertible.
\end{theorem}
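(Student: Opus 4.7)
The plan is to handle the two ``iff''s separately. The first reduces to a scalar identity via Theorem~\ref{thm1} combined with the injectivity of the map $\Omega:\phi\mapsto\mathcal{S}_\phi^{k,n}$ recorded at the end of the Preliminaries. The second then uses the invertibility of $\phi$ to cancel it and is finished off by a Fourier-coefficient argument on the torus.

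For the first equivalence, I would compute each side of the commutator as a slant Hankel operator. On the one hand, directly from the definition,
$\mathcal{S}_\phi^{k,n}M_\psi=\mathcal{V}_k^n M_\phi M_\psi=\mathcal{V}_k^n M_{\phi\psi}=\mathcal{S}_{\phi\psi}^{k,n}$. On the other hand, Theorem~\ref{thm1} (with the roles of the symbols interchanged) gives $M_\psi \mathcal{S}_\phi^{k,n}=\mathcal{S}_{\psi(z^{-k})\phi(z)}^{k,n}$. Since $\Omega$ is one-to-one, the operator identity $\mathcal{S}_\phi^{k,n}M_\psi=M_\psi\mathcal{S}_\phi^{k,n}$ is equivalent to the scalar identity $\phi(z)\psi(z)=\phi(z)\psi(z^{-k})$ in $L^\infty(\mathbb{T}^n)$, which is the first claim.

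The easy half of the second equivalence is immediate: if $\psi\equiv c$ is constant then $\psi(z)=\psi(z^{-k})$ identically, so the operator identity holds for every $\phi$, and in particular for any invertible one (e.g.\ $\phi\equiv 1$). For the converse, suppose the commutator identity holds for a particular invertible $\phi\in L^\infty(\mathbb{T}^n)$, i.e.\ for a $\phi$ with $\phi\neq 0$ a.e.\ on $\mathbb{T}^n$. Then the first equivalence yields $\phi(z)\psi(z)=\phi(z)\psi(z^{-k})$, and cancelling $\phi$ gives $\psi(z)=\psi(z^{-k})$ almost everywhere.

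The main obstacle is then to show that this functional equation forces $\psi$ to be constant. I would expand $\psi=\sum_{m\in\mathbb{Z}^n}c_m z^m$ in its Fourier series and match coefficients: the coefficient of $z^r$ in $\psi(z^{-k})=\sum_m c_m z^{-km}$ is $c_{-r/k}$ when every component of $r$ is divisible by $k$ and $0$ otherwise. Hence for each $r\neq 0$ either $c_r=0$ outright, or $c_r=c_{-r/k}$ and the argument repeats with $-r/k$ in place of $r$. Each iteration strictly shrinks the nonzero components by a factor of $k\geq 2$ while keeping them integer-valued, so after finitely many steps some component must fail to be divisible by $k$ and the coefficient is forced to be $0$. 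Thus $c_r=0$ for every $r\neq 0$, so $\psi\equiv c_0$ is constant, completing the proof.
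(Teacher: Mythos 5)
Your proposal is correct and follows essentially the same route as the paper: both sides of the commutator are rewritten as slant Hankel operators via Theorem~\ref{thm1}, the injectivity of $\Omega$ reduces the operator identity to $\phi(z)\psi(z)=\phi(z)\psi(z^{-k})$, invertibility of $\phi$ allows cancellation to $\psi(z)=\psi(z^{-k})$, and a Fourier-coefficient comparison forces $\psi$ to be constant. Your iteration argument (shrinking the index by a factor of $k$ until divisibility fails) just spells out more carefully the step the paper compresses into ``$a_m=0$ for $k\nmid m$ and $a_{-km}=a_m$, hence $\psi=a_0$.''
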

\begin{proof}  By Theorem \ref{thm2},
\begin{equation}\label{eqn1}
\mathcal{S}_\phi^{k,n} M_\psi = \mathcal{V}_k^n M_\phi M_\psi = \mathcal{V}_k^n M_{\phi\psi} = \mathcal{S}_{\phi\psi}^{k,n} {\rm and} \; M_\psi \mathcal{S}_\phi^{k,n}  = \mathcal{S}_{\psi(z^{-k}) \phi(z)}^{k,n} 
\end{equation}
Using equation (\ref{eqn1})  we can say $  \mathcal{S}_\phi^{k,n} M_\psi = M_\psi\mathcal{S}_\phi^{k,n} \Leftrightarrow \mathcal{S}_{\phi(z) \psi(z)}^{k,n} = \mathcal{S}_{\psi(z^{-k})\phi(z) }^{k,n} \Leftrightarrow \phi(z) \psi(z) = \phi(z) \psi(z^{-k}) .$  
If $ \psi(z) = constant $ then, $ \psi(z) = \psi(z^{-k}) \Rightarrow \mathcal{S}_\phi^{k,n} M_\psi = M_\psi\mathcal{S}_\phi^{k,n}.$

For converse part, let $ \phi $ be invertible. Then $ \exists \; \phi^{-1} \in L^\infty (\mathbb{T}^n)$ such that $ \phi \phi^{-1} = I = \phi^{-1} \phi $. So, $ \phi(z) \psi(z) = \phi(z)\psi(z^{-k}) \Leftrightarrow \psi(z) = \psi(z^{-k})$. $ \psi(z) = \displaystyle \sum_{m \in \mathbb{Z}^n} a_m z^m $ implies $\psi(z^{-k}) = \displaystyle\sum_{m \in \mathbb{Z}^n} a_m z^{-k m} $. Now, $ \psi(z)= \psi(z^{-k}) \Rightarrow \displaystyle \sum_{m \in \mathbb{Z}^n} a_m z^m = \displaystyle\sum_{m \in \mathbb{Z}^n} a_m z^{-k m} \Rightarrow \displaystyle\sum_{0 \neq m \in \mathbb{Z}^n,k \nmid m }a_m z^m + \sum_{0 \neq m \in \mathbb{Z}^n}( a_{-km}- a_m)z^{-km} = 0 $. Therefore, $ a_m = 0 \; \forall k \nmid m $ and $ a_{-km}= a_m \; \forall 0 \neq m \in \mathbb{Z}^n $. Hence, $ \psi(z) = a_0 $, which is a constant.
\end{proof}
\begin{theorem}\label{thm01}
 $ \mathcal{S}_\phi^{k,n} $ is a $ k^{th} $ order slant Hankel operator on $ L ^2(\mathbb{T}^n)$ iff $ \big\langle \mathcal{S}_\phi^{k,n}  e_{m-k{\varepsilon_j}},e_{m^{\prime} + {\varepsilon_j}} \big\rangle = \big\langle \mathcal{S}_\phi^{k,n}  e_m , e_{m^{\prime}} \big\rangle $ $ \forall \; m , m^{\prime} \in\mathbb{Z}^n , 1 \leq j \leq n.$
\end{theorem}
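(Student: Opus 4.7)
The plan is to reduce the equivalence in Theorem \ref{thm01} to the multiplication-operator identity from Lemma \ref{lemma01}, namely that $\mathcal{S}_\phi^{k,n}$ is a $k^{th}$ order slant Hankel operator iff $M_{z_j}\mathcal{S}_\phi^{k,n} = \mathcal{S}_\phi^{k,n} M_{z_j^{-k}}$ for every $1 \leq j \leq n$. Since $\{e_m : m \in \mathbb{Z}^n\}$ is an orthonormal basis of $L^2(\mathbb{T}^n)$, the operator identity is equivalent to the collection of scalar identities obtained by taking inner products against $e_{m'}$ on the right and evaluating against $e_m$ on the left.

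First I would record the elementary actions $M_{z_j} e_m = e_{m+\varepsilon_j}$ and $M_{z_j^{-k}} e_m = e_{m-k\varepsilon_j}$ for all $m \in \mathbb{Z}^n$, together with the adjoint relation $M_{z_j}^{*} = M_{z_j^{-1}}$ on $L^2(\mathbb{T}^n)$. Then, for arbitrary $m, m' \in \mathbb{Z}^n$ and $1 \leq j \leq n$, I would compute
\[
\big\langle M_{z_j}\mathcal{S}_\phi^{k,n} e_m,\, e_{m'+\varepsilon_j}\big\rangle = \big\langle \mathcal{S}_\phi^{k,n} e_m,\, M_{z_j^{-1}} e_{m'+\varepsilon_j}\big\rangle = \big\langle \mathcal{S}_\phi^{k,n} e_m,\, e_{m'}\big\rangle,
\]
and on the other side
\[
\big\langle \mathcal{S}_\phi^{k,n} M_{z_j^{-k}} e_m,\, e_{m'+\varepsilon_j}\big\rangle = \big\langle \mathcal{S}_\phi^{k,n} e_{m-k\varepsilon_j},\, e_{m'+\varepsilon_j}\big\rangle.
\]
Equating the two expressions yields exactly the scalar condition in the theorem.

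For the forward direction I would invoke Lemma \ref{lemma01} to assert $M_{z_j}\mathcal{S}_\phi^{k,n} = \mathcal{S}_\phi^{k,n} M_{z_j^{-k}}$, and then read off the matrix-entry identity from the above computation. For the converse, since $m,m',j$ are arbitrary and $\{e_{m'+\varepsilon_j} : m' \in \mathbb{Z}^n\} = \{e_{m'}\}$ runs over the full orthonormal basis as $m'$ varies, the scalar identity forces $M_{z_j}\mathcal{S}_\phi^{k,n}$ and $\mathcal{S}_\phi^{k,n}M_{z_j^{-k}}$ to agree on every basis vector, hence as operators; Lemma \ref{lemma01} then gives that $\mathcal{S}_\phi^{k,n}$ is a $k^{th}$ order slant Hankel operator.

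The argument is essentially a bookkeeping exercise, so there is no substantial obstacle; the only point that requires a little care is shifting the index $m'$ by $\varepsilon_j$ when moving $M_{z_j}$ onto the right-hand slot of the inner product, so that the resulting identity takes the clean symmetric form stated in the theorem rather than the equivalent but less tidy form $\langle \mathcal{S}_\phi^{k,n} e_{m-k\varepsilon_j}, e_{m'}\rangle = \langle \mathcal{S}_\phi^{k,n} e_m, e_{m'-\varepsilon_j}\rangle$.
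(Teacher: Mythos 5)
Your proof is correct, but it takes a genuinely different route from the one in the paper. You reduce the statement directly to Lemma \ref{lemma01}: writing $\big\langle M_{z_j}\mathcal{S}_\phi^{k,n} e_m, e_{m'+\varepsilon_j}\big\rangle = \big\langle \mathcal{S}_\phi^{k,n} e_m, e_{m'}\big\rangle$ via the adjoint relation $M_{z_j}^{*}=M_{z_j^{-1}}$, and $\big\langle \mathcal{S}_\phi^{k,n} M_{z_j^{-k}} e_m, e_{m'+\varepsilon_j}\big\rangle = \big\langle \mathcal{S}_\phi^{k,n} e_{m-k\varepsilon_j}, e_{m'+\varepsilon_j}\big\rangle$, so that the operator identity $M_{z_j}\mathcal{S}_\phi^{k,n}=\mathcal{S}_\phi^{k,n}M_{z_j^{-k}}$ and the matrix-entry identity are equivalent because $\{e_{m'+\varepsilon_j}: m'\in\mathbb{Z}^n\}$ exhausts the orthonormal basis. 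The paper instead proves the theorem by an inductive bookkeeping argument on the matrix representation: it fixes all but one coordinate pair, varies $m_1, m_1'$ to exhibit a level-$1$ slant Hankel block, then varies $m_2, m_2'$ to assemble these into a level-$2$ block matrix, and so on through $n$ levels, with a separate decomposition $L^2(\mathbb{T}^n)=\oplus_{m_n}\mathcal{G}_{n-1,(m_n)}$ for the converse. Your argument is shorter and more transparent, and it is essentially the same computation the paper itself carries out later in equations (\ref{myeqn9})--(\ref{myeqn10}) when proving Theorem \ref{mythm8}; what the paper's longer route buys is the explicit nested block-matrix structure of level $n$, which is the content of Theorem \ref{mythm8} and is not delivered by your reduction. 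The only point to flag is that your proof leans entirely on Lemma \ref{lemma01}, which the paper states without proof; since it is an established prior result in the paper, this reliance is legitimate.
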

\begin{proof}
 Let $\lbrace a_{\eta , \, \zeta} \rbrace_{\eta ,\, \zeta \;\in \;\mathbb{Z}_+^n}$ be scalars such that $\displaystyle \Tau e_{\zeta} =  \sum_{\eta \in\mathbb{Z}_+^n} a_{\eta,\,\zeta} \,e_{\eta} \quad  \forall \; \zeta\in\mathbb{Z}_+^n $. Since, $ \mathcal{G}= \{e_m \}_{\,m\,\in\,\mathbb{Z}^n}$ is an orthonormal basis for $ {L}^2 (\mathbb{T}^n) $, so for arbitrarily fixed  $ {m_j, m_{j +1} \cdots, m_n }\in\mathbb{Z} $ and if $ \mathcal{G}_{[m_j,\cdots ,m_n]}= \{ e_{(m_1,\cdots ,m_n)}: {m_\beta} \, \in\mathbb{Z} $ for $ 1\leq \; \beta  < \; j \}$, then $ \mathcal{G}_{[m_j,\cdots ,m_n]}$ is an orthonormal basis for $ {L}^2(\mathbb{t}^{\;j -1}) $. For $ m =(m_1,m_2,\cdots ,m_n) $, $ m^{\prime}=( m^{{\prime}}_1, m^{{\prime}}_2,\cdots , m^{{\prime}}_n) \in\mathbb{Z}^n$ let $ \big\langle \, \Tau \, e_{m - k\varepsilon_j} , e_{m^{{\prime}} +\varepsilon_j } \big\rangle = \big\langle \, \Tau \, e_m  , e_{m^{{\prime}}}   \big\rangle \, \forall \; m, \, {m^{\prime} \in\mathbb{Z}^n} $ and $ 1 \leq \; j \leq  \; n $. Vary $ m_1 $, $ m^{{\prime}}_1$ and fix $(m_2,\cdots ,m_n)$, $(m^{{\prime}}_2,\cdots ,m^{{\prime}}_n)$ 
\begin{align*}
&\quad \;\; \bigg\langle \, \Tau \, e_{m - k\varepsilon_1}(z) , e_{ m^{\prime} +\varepsilon_1 } (z) \bigg\rangle = \bigg\langle \, \Tau \, e_m(z)  , e_{ m^{{\prime}}}  (z) \bigg\rangle \cr 
&\displaystyle\Rightarrow  \sum_{\eta \in\mathbb{Z}^n} a_{\eta,\, m -k\varepsilon_1} \big \langle e_{\eta}(z),e_{  m^{{\prime}} + \varepsilon_1}(z)\big \rangle = \sum_{\eta \in\mathbb{Z}^n} a_{\eta,\, m} \big \langle e_{\eta}(z),e_{ m^{{\prime}}}(z)\big \rangle \cr 
&\Rightarrow  a_{ m^{{\prime}}+\varepsilon_1,m -k\varepsilon_1} = \; a_{ m^{{\prime}},m} \quad \forall\, m, m^{\prime} \in\mathbb{Z} .
\end{align*}
So, $ \Tau : \mathcal{G}_{1,[m_2,\cdots ,m_n]} \mapsto \mathcal{G}_{1,[ m^{{\prime}}_2,\cdots , m^{{\prime}}_n]} $ is a $k^{th}$ order slant Hankel matrix of level $ 1 $, $ \mathcal{S}^{k,1}_{( m^{{\prime}}_2,\cdots , m^{{\prime}}_n)(m_2,\cdots ,m_n)} $. Varying $ m_2 $,$ m^{{\prime}}_2 $ and fixing $(m_3,\cdots ,m_n)$ and $ (m^{{\prime}}_3,\cdots ,m^{{\prime}}_n) $,
\begin{align*}
&\quad \;\;  \Big\langle \, \Tau \, e_{m - k\varepsilon_2}, e_{m^{{\prime}} +\varepsilon_2 }  \Big\rangle = \Big\langle \, \Tau \, e_m  , e_m^{{\prime}}  \Big\rangle \cr
& \Rightarrow \mathcal{S}_{(m^{{\prime}}_2+ 1,m^{{\prime}}_3,\cdots,m^{{\prime}}_n)(m_2-k,m_3,\cdots ,m_n)}^{k,1} = \mathcal{S}_{(m^{{\prime}}_2,\cdots ,m^{{\prime}}_n)(m_2,\cdots ,m_n)}^{k,1} .
\end{align*}
Then, $ \Tau : \mathcal{G}_{2,[m_3,\cdots ,m_n]} \mapsto \mathcal{G}_{2,[ m^{{\prime}}_3,\cdots,m^{{\prime}}_n]} $  can be expressed as  $k^{th}$ order slant Hankel  matrix of level $ 2 $, $ \mathcal{S}_{(m^{{\prime}}_3,\cdots,m^{{\prime}}_n)(m_3,\cdots,m_n)}^{(k,2)} $.
Further, varying $ m_3 $, $ m^{{\prime}}_3 $ and fixing  $(m_4,\cdots,m_n)$, $ (m^{{\prime}}_4,\cdots,m^{{\prime}}_n) $
\begin{align*} 
&\quad \;\; \Big\langle \, \Tau \, e_{m - k\varepsilon_3}, e_{m^{{\prime}} +\varepsilon_3 }  \Big\rangle = \Big\langle \, \Tau \, e_m  , e_{m^{\prime}}  \Big\rangle \cr
&\Rightarrow \mathcal{S}_{(m_3+ 1,m_4,\cdots,m_n)(m^{{\prime}}_3-k,m^{{\prime}}_4,\cdots,m^{{\prime}}_n)}^{(k,2)} = \mathcal{S}_{(m^{{\prime}}_3,\cdots,m^{{\prime}}_n)(m_3,\cdots,m_n)}^{(2)} .
\end{align*}
Hence, $ \Tau : \mathcal{G}_{3,[m_4,\cdots,m_n]} \mapsto \mathcal{G}_{3,[m^{{\prime}}_4,\cdots,m^{{\prime}}_n]} $  can be expressed as $k^{th}$ order slant Hankel  matrix of level $ 3 $, $ \mathcal{H}_{(m^{{\prime}}_4,\cdots,m^{{\prime}}_n)(m_4,\cdots,m_n)}^{(3)} $.
Similarly, continuing the same process, it  can be concluded that $ \Tau : {L}^2(\mathbb{T}^n)\mapsto {L}^2(\mathbb{T}^n) $ can be expressed as $k^{th}$ order slant Hankel matrix of level $ n $ after $ n $ steps.
Conversely, suppose $ \Tau $ can be expressed as $k^{th}$ order slant Hankel matrix of level $ n $. So, for $ m = (m_1,\cdots,m_n) $ 
and $ m^{{\prime}} = (m^{{\prime}}_1,\cdots,m^{{\prime}}_n) $, 
$ \Tau : \mathcal{G}_{j-1,[m_j,\cdots,m_n]} \mapsto \mathcal{G}_{j-1,[m^{{\prime}}_j,\cdots,m^{{\prime}}_n]} $ can be represented as a $k^{th}$ order slant Hankel matrix of  level $ (j-1) $,  $ \mathcal{S}_{(m^{{\prime}}_j,\cdots,m^{{\prime}}_n)(m_j,\cdots,m_n)}^{k,(j-1)} $ for $ j = 2, \cdots ,n $. Therefore,
$ \Big\langle \, \Tau \, e_{m - k\varepsilon_j}, e_{m^{{\prime}} +\varepsilon_j }  \Big\rangle = \Big\langle \, \Tau \, e_m  , e_{m^{\prime}}  \Big\rangle \; \forall \;  j = 1,2,\cdots,n-1.$
Lastly, let's consider the $ n-1 $ torus $ {L}^2(\mathbb{T}^{n-1} )$, then ${L}^2(\mathbb{T}^{n-1} )$ is an isomorphic copy of   $\mathcal{G}_{n-1,(m_n)}$ for each $ m_n \in \mathbb{Z}$. Hence, $ n $ torus $ {L}^2(\mathbb{T}^{n} )$ can be expressed as $ {L}^2(\mathbb{T}^{n}) = \displaystyle\oplus _{m_n \in\mathbb{Z}} \; \mathcal{G}_{n-1,(m_n)}$. So, $ \Tau : {L}^2(\mathbb{T}^n)\mapsto {L}^2(\mathbb{T}^n) $ is a  $ k^{th} $ order slant Hankel matrix of level n where $ (m^{{\prime}}_n,m_n)^{th}$ entry is $ \mathcal{S}_{m^{{\prime}}_n,m_n}^{(k,n-1)} $ and also we have $ \mathcal{S}_{m^{{\prime}}_n+1,m_n-k}^{k,(n-1)} =  \mathcal{S}_{m^{{\prime}}_n,m_n}^{(n-1)} $. Hence, $ \big\langle \, \Tau \, e_{m - k\varepsilon_n}, e_{m^{{\prime}} +\varepsilon_n } \big\rangle =\big\langle \, \Tau \, e_m  , e_{m^{\prime}}  \big\rangle $. 
\end{proof}

\begin{lemma}\label{lemma3}
If $ \mathcal{S}_\phi^{k,n} $ is a $ k^{th} $ order slant Hankel operator on $ L ^2(\mathbb{T}^n)$ then $ \big\langle \mathcal{S}_\phi^{k,n}  e_{m-k{\varepsilon_j}},e_{m^{\prime} + {\varepsilon_j}} \big\rangle = \big\langle \mathcal{S}_\phi^{k,n}  e_m , e_{m^{\prime}} \big\rangle $ $ \forall \; m , m^{\prime} \in\mathbb{Z}^n , 1 \leq j \leq n.$
\end{lemma}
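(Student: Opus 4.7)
The cleanest route is to derive the claim from Lemma \ref{lemma2} (the operator–multiplier characterisation of slant Hankel operators), rather than re-run the block-matrix argument used in Theorem \ref{thm01}. Essentially, the lemma is the ``forward'' direction of Theorem \ref{thm01} phrased entry-wise, so the work is just to translate the intertwining identity $M_{z^m}\mathcal{S}_\phi^{k,n} = \mathcal{S}_\phi^{k,n} M_{z^{-km}}$ into a statement about matrix entries in the basis $\{e_m\}_{m\in\mathbb{Z}^n}$.

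The plan is as follows. First, specialise Lemma \ref{lemma2} by choosing $m = \varepsilon_j$, which gives the single-variable intertwining $M_{z_j}\mathcal{S}_\phi^{k,n} = \mathcal{S}_\phi^{k,n} M_{z_j^{-k}}$ for every $1\leq j\leq n$. Next, observe the two elementary actions on basis vectors, namely $M_{z_j^{-k}} e_m = e_{m - k\varepsilon_j}$ and $M_{z_j}^{\ast} e_{m'+\varepsilon_j} = M_{\bar z_j} e_{m'+\varepsilon_j} = e_{m'}$, both valid because $|z_j| = 1$ on $\mathbb{T}^n$.

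Combining these, the verification is a one-line inner-product manipulation:
\begin{align*}
\big\langle \mathcal{S}_\phi^{k,n} e_{m-k\varepsilon_j}, e_{m'+\varepsilon_j}\big\rangle
&= \big\langle \mathcal{S}_\phi^{k,n} M_{z_j^{-k}} e_m, e_{m'+\varepsilon_j}\big\rangle \\
&= \big\langle M_{z_j}\mathcal{S}_\phi^{k,n} e_m, e_{m'+\varepsilon_j}\big\rangle \\
&= \big\langle \mathcal{S}_\phi^{k,n} e_m, M_{z_j}^{\ast} e_{m'+\varepsilon_j}\big\rangle
 = \big\langle \mathcal{S}_\phi^{k,n} e_m, e_{m'}\big\rangle.
\end{align*}
Since $j$ was an arbitrary index in $\{1,\dots,n\}$ and $m,m'$ were arbitrary in $\mathbb{Z}^n$, the identity holds in the required generality.

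There is no real obstacle here: the only conceptual point is to recognise that Lemma \ref{lemma2} is being used as a black box, and the only bookkeeping point is to be careful with signs of shifts, specifically that $M_{z_j^{-k}}$ shifts the index by $-k\varepsilon_j$ while $M_{z_j}^{\ast}$ shifts by $-\varepsilon_j$, so that the two sides match. One could equally extract this from the forward direction of Theorem \ref{thm01} and cite it, but giving the short direct argument above keeps the lemma self-contained and emphasises that the slant Hankel structure is fully encoded by the single-step intertwiners $M_{z_j}\mathcal{S}_\phi^{k,n} = \mathcal{S}_\phi^{k,n} M_{z_j^{-k}}$.
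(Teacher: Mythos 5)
Your proof is correct, but it takes a different route from the paper's. The paper proves this lemma by a direct Fourier computation from the definition $\mathcal{S}_\phi^{k,n} = \mathcal{V}_k^n M_\phi$: writing $\phi = \sum_r a_r z^r$ and using $(\mathcal{V}_k^n)^{\ast} e_{m'} = e_{-km'}$, it reduces both inner products to $\sum_r a_r \big\langle z^{r+m}, z^{-km'}\big\rangle$, i.e.\ to the single coefficient $a_{-km'-m}$, so the two sides coincide by inspection. You instead derive the identity from the intertwining relation $M_{z_j}\mathcal{S}_\phi^{k,n} = \mathcal{S}_\phi^{k,n} M_{z_j^{-k}}$ of Lemma \ref{lemma01}/\ref{lemma2} via a three-step adjoint manipulation, and every step checks out: $M_{z_j^{-k}} e_m = e_{m-k\varepsilon_j}$, $M_{z_j}^{\ast} = M_{\bar z_j}$ sends $e_{m'+\varepsilon_j}$ to $e_{m'}$, and the quantifiers are handled correctly. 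Your version is shorter and more conceptual, and it makes clear that the entrywise Hankel condition is nothing but the intertwining identity read off against the standard basis. The trade-off is that it leans on Lemma \ref{lemma01}, which the paper asserts without proof; to make your argument genuinely self-contained you should add the one-line verification that $M_{z_j}\mathcal{V}_k^n = \mathcal{V}_k^n M_{z_j^{-k}}$ (check both sides on $e_m$, splitting into the cases $k \mid m$ and $k \nmid m$), after which $M_{z_j}\mathcal{S}_\phi^{k,n} = M_{z_j}\mathcal{V}_k^n M_\phi = \mathcal{V}_k^n M_{z_j^{-k}} M_\phi = \mathcal{V}_k^n M_\phi M_{z_j^{-k}} = \mathcal{S}_\phi^{k,n} M_{z_j^{-k}}$ follows since multiplication operators commute. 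With that small addition your proof is complete and arguably cleaner than the paper's coefficient computation.
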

\begin{proof} We have
\begin{eqnarray*}
\big\langle \mathcal{S}_\phi^{k,n} e_{m-k{\varepsilon_j}}(z),e_{m^{\prime} + {\varepsilon_j}}(z) \big\rangle
&= &\big\langle  \phi(z) z^{m-k\varepsilon_j} ,(\mathcal{V}_k^n)^{\ast} ( z^{ m^{\prime} +{\varepsilon_j}}) \big\rangle \\
&= & \displaystyle\sum_{ r \in\mathbb{Z}^n } a_r \bigg\langle z^{r + m -k\varepsilon_j},z^{-k m^{\prime}-k\varepsilon_j} \bigg\rangle \\
&= &\displaystyle\sum_ { r \in\mathbb{Z}^n } a_r \bigg\langle z^{r+m} , z^{-km^{\prime}} \bigg\rangle \\
&=&  \big\langle \phi(z) z^m , (\mathcal{V}_k^n)^{\ast} (z^{m^{\prime}})\big\rangle \\
&=& \big\langle \mathcal{S}_{\phi}^{k,n} e_m(z),e_{m^{\prime}}(z) \big\rangle .
\end{eqnarray*}
\end{proof}

\begin{theorem}\label{mythm8}
A bounded linear operator $ \mathcal{S} $ on $ {L}^2(\mathbb{T}^n) $ is a $k^{th}$ order slant Hankel operator of level $ n $ iff $ \mathcal{S} $ can be represented as a $k^{th}$ order slant Hankel matrix of level $ n $.
\end{theorem}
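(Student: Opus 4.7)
The plan is to reduce Theorem \ref{mythm8} to the inner-product criterion already established in Theorem \ref{thm01} (together with Lemma \ref{lemma3}), by reading the matrix of $\mathcal{S}$ in the orthonormal basis $\{e_m\}_{m\in\mathbb{Z}^n}$ of $L^2(\mathbb{T}^n)$. Writing $a_{m',m} := \langle \mathcal{S}\,e_m, e_{m'}\rangle$, the whole content of the theorem is that the nested block structure encoded in the definition of a ``$k^{th}$ order slant Hankel matrix of level $n$'' is exactly equivalent to the single coordinate-wise identity
\[
a_{m'+\varepsilon_j,\,m-k\varepsilon_j} \;=\; a_{m',m} \qquad \text{for every } m,m'\in\mathbb{Z}^n,\; 1\le j\le n.
\]

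For the forward direction, I would assume $\mathcal{S}$ is a $k^{th}$ order slant Hankel operator; Lemma \ref{lemma3} (equivalently the ``only if'' half of Theorem \ref{thm01}) then yields the boxed identity. I would then argue by induction on the level, mirroring the bookkeeping carried out inside the proof of Theorem \ref{thm01}. Fixing the tail indices $(m_2,\dots,m_n)$ and $(m'_2,\dots,m'_n)$ and varying $m_1, m'_1$, the $j=1$ invariance forces the resulting $\mathbb{Z}\times\mathbb{Z}$ slab to coincide with the template $\mathcal{S}^{k,1}_{m_2,\dots,m_n}$. Passing to $j=2$ promotes these slabs into the blocks of $\mathcal{S}^{k,2}_{m_3,\dots,m_n}$, and iterating for $j=3,\dots,n$ builds up the recursive description of $\mathcal{S}^{k,n}$ verbatim.

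The converse runs in reverse: if $\mathcal{S}$ is presented as a $k^{th}$ order slant Hankel matrix of level $n$, then at each nesting depth the defining ``constant along slanted diagonals'' rule is precisely $a_{m'+\varepsilon_j,m-k\varepsilon_j}=a_{m',m}$ for the corresponding $j$. Appealing to the ``if'' direction of Theorem \ref{thm01} then gives that $\mathcal{S}$ is a $k^{th}$ order slant Hankel operator, completing the equivalence.

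The chief difficulty is essentially combinatorial: one must carefully track the coordinated shifts $m\mapsto m-k\varepsilon_j$ and $m'\mapsto m'+\varepsilon_j$ at each of the $n$ levels and verify they correspond to the ``$-k$'' column-shift and ``$+1$'' row-shift appearing in the block templates $\mathcal{S}^{k,\ell}$. All the analytic content — boundedness, the Fourier representation of $\phi$, and the action $(\mathcal{V}_k^n)^\ast e_m = e_{-km}$ — has already been absorbed into Theorem \ref{thm01} and Lemma \ref{lemma3}, so no additional estimates are needed at this stage; the remainder is matching definitions.
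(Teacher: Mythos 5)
Your overall strategy matches the paper's: both directions are routed through the entry-wise identity $\langle\mathcal{S}e_{m-k\varepsilon_j},e_{m'+\varepsilon_j}\rangle=\langle\mathcal{S}e_m,e_{m'}\rangle$, with the forward implication supplied by Lemma \ref{lemma3} and the level-by-level block bookkeeping already carried out inside the proof of Theorem \ref{thm01}. The one place you diverge, and where your plan has a real soft spot, is the final step of the converse. You propose to pass from ``the matrix identity holds'' to ``$\mathcal{S}$ is a $k^{th}$ order slant Hankel operator'' by citing the ``if'' direction of Theorem \ref{thm01}. But Theorem \ref{thm01} is stated for an operator already written as $\mathcal{S}_\phi^{k,n}$, and what its proof actually establishes is the equivalence of the inner-product identity with the \emph{matrix representation}; it never produces a symbol $\phi$ with $\mathcal{S}=\mathcal{V}_k^n M_\phi$. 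Unwound, your converse reads ``matrix representation $\Rightarrow$ identity $\Rightarrow$ matrix representation'' and never reaches the operator-theoretic conclusion. The paper closes this loop differently: from the identity it computes
\[
\big\langle M_{z_j}\mathcal{S}e_m,e_{m'}\big\rangle=\big\langle \mathcal{S}e_m,e_{m'-\varepsilon_j}\big\rangle=\big\langle \mathcal{S}e_{m-k\varepsilon_j},e_{m'}\big\rangle=\big\langle \mathcal{S}M_{z_j^{-k}}e_m,e_{m'}\big\rangle,
\]
hence $M_{z_j}\mathcal{S}=\mathcal{S}M_{z_j^{-k}}$ for each $j$, and then invokes Lemma \ref{lemma01}, which characterizes slant Hankel operators among \emph{all} bounded operators by this intertwining relation. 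That intertwining step is the one piece of the theorem that is not pure combinatorial matching, and your plan should include it (or some other argument actually producing $\phi$) rather than delegating it to Theorem \ref{thm01}. The forward direction and the matching of the nested block structure against the coordinate-wise shifts are fine as you describe them.
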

\begin{proof}
Let $ \mathcal{S} $ be a $k^{th}$ order slant Hankel operator of level $ n $ on $ {L}^2(\mathbb{T}^n) $.
Then, by Definition \ref{mydefn7},  $ \mathcal{S} = \mathcal{S}_{\phi}^{k,n} $ for some $ \phi \in\it{L}^\infty(\mathbb{T}^n) $.
If $(\beta_{m,m^{\prime}})_{m,m^{\prime} \in\;\mathbb{Z}^n}$ is the matrix representation of $ \mathcal{S}_{\phi}^{k,n} $ with respect to the  orthonormal basis, then by Lemma \ref{lemma3}
\[ \big\langle \mathcal{S}_{\phi} e_{m-k{\varepsilon_j}},e_{m^{\prime} + {\varepsilon_j}} \big\rangle = \big\langle \mathcal{S}_{\phi} e_m , e_{m^{\prime}} \big\rangle \quad \forall \; m , m^{\prime}  \in \; \mathbb{Z}^n , 1 \leq j \leq n. \]
Therefore, $(\beta_{m,m^{\prime}})_{m,m^{\prime} \, \in\,\mathbb{Z}^n}$  is a $k^{th}$ order slant Hankel matrix of level $ n $.\\
Conversely, let the matrix $(\beta_{m,m^{\prime}})_{ m ,m^{\prime} \in\;\mathbb{Z}^n}$ of $ \mathcal{S} $ be a slant Hankel matrix of level $ n $. Then,
\begin{align}\label{myeqn8}
\big\langle \mathcal{S} e_m , e_{m^{\prime}} \big\rangle =  (\beta_{m,m^{\prime}}) =  (\beta_{m^{\prime}+ \varepsilon_j,m - k\varepsilon_j})  =  \big\langle \mathcal{S} e_{m -k\varepsilon_j }, e_{m^{\prime} + \varepsilon_j} \big\rangle.
\end{align}
Now,
\begin{eqnarray}\label{myeqn9}
\big\langle {M}_{z_j} \mathcal{S} e_m , e_{m^{\prime}} \big\rangle & = &\big\langle \mathcal{S} e_m , {M}_{\overline z_j} e_{m^{\prime}} \big\rangle  =  \big\langle \mathcal{S} e_m , e_{m^{\prime} - \varepsilon_j} \big\rangle  \nonumber\\
&= &\big\langle \mathcal{S} e_{m - k\varepsilon_j} , e_{m^{\prime}} \big\rangle  =  \big\langle \mathcal{S} {M}_{z_j^{-k}}e_m , e_{m^{\prime}} \big\rangle 
\end{eqnarray}
\begin{align}\label{myeqn10}
&\Rightarrow {M}_{z_j} \mathcal{S} e_m = \mathcal{S} {M}_{z_j^{-k}}e_m  \quad \forall\;\; m \in\mathbb{Z}^n \cr
& \Rightarrow {M}_{z_j} \mathcal{S}  = \mathcal{S} {M}_{z_j^{-k}}.
\end{align}
Hence, by Lemma \ref{lemma01},  \; $ \mathcal{S} $ is a slant Hankel operator.
\end{proof}

\section{Properties} 
\begin{theorem}\label{mythm9}
For all $ m \in\mathbb{Z}^n $, $ \mathcal{V}_k^n \, M_{z^{-km}} \, \mathcal({V}_k^n)^{\ast} = {M}_{z^m}$  and $\mathcal{V}_k^n \, M_{z^{-l}} \, \mathcal({V}_k^n)^{\ast} = 0 $ if $ k\nmid l, l \in\mathbb{Z}^n $.
\end{theorem}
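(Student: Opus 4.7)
The plan is to verify both identities pointwise on the orthonormal basis $\{e_r\}_{r \in \mathbb{Z}^n}$ by chaining the three explicit formulas: $(\mathcal{V}_k^n)^{\ast} e_r = e_{-kr}$, $M_{z^s} e_r = e_{r+s}$, and $\mathcal{V}_k^n e_p = e_{-p/k}$ when $k\mid p$ and $0$ otherwise. Since all three operators are bounded and the basis is total in $L^2(\mathbb{T}^n)$, agreement on basis vectors forces agreement as operators.

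For the first identity, I would fix $r \in \mathbb{Z}^n$ and compute in order: applying the adjoint gives $(\mathcal{V}_k^n)^{\ast} e_r = e_{-kr}$; multiplying by $z^{-km}$ yields $e_{-kr - km} = e_{-k(r+m)}$; and because $-k(r+m)$ is divisible by $k$ coordinatewise, the final application of $\mathcal{V}_k^n$ produces $e_{-(-k(r+m))/k} = e_{r+m}$, which is precisely $M_{z^m} e_r$. For the second identity, the same chain gives $M_{z^{-l}}(\mathcal{V}_k^n)^{\ast} e_r = e_{-kr - l}$; since $k$ divides $-kr$ but $k \nmid l$, the index $-kr - l$ is not divisible by $k$, so $\mathcal{V}_k^n$ kills it and returns $0$. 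As $r$ was arbitrary, $\mathcal{V}_k^n M_{z^{-l}}(\mathcal{V}_k^n)^{\ast}$ vanishes on a basis, hence is the zero operator.

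The only subtlety worth flagging is the divisibility convention: in the definition of $\mathcal{V}_k^n$, the expression $k \mid m$ for $m \in \mathbb{Z}^n$ should be read componentwise, and $m/k$ likewise. Once that is granted, the verification above is a sequence of three substitutions with no estimates required, so there is no real obstacle; the statement is essentially the computational content of $\mathcal{V}_k^n$ being a co-isometry whose range projection sees only the $k$-divisible frequencies, as already noted in the paragraph following Definition \ref{mydefn7}.
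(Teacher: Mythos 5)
Your proof is correct and complete. The paper states Theorem~\ref{mythm9} without any proof at all, and your basis-vector computation --- chaining $(\mathcal{V}_k^n)^{\ast}e_r = e_{-kr}$, the shift by $-km$ (resp.\ $-l$), and the divisibility test built into $\mathcal{V}_k^n$ --- is exactly the verification one would expect; the componentwise reading of $k \mid m$ that you flag is indeed the convention in force throughout the paper, and your observation that $k\nmid l$ in one coordinate already forces $k \nmid (-kr-l)$ in that coordinate is the only point where any care is needed.
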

\begin{theorem}\label{thm3}
$ \mathcal{V}_k^n \mathcal{S}_{\phi}^{k,n} $ is a $ k^{th} $ order slant Hankel operator of level $n$ iff $ \phi = 0 $. 
\end{theorem}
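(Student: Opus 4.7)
The forward implication is immediate: if $\phi = 0$ then $\mathcal{S}_\phi^{k,n}=0$, so $\mathcal{V}_k^n\mathcal{S}_\phi^{k,n}=0$, which is trivially a $k^{th}$ order slant Hankel operator (induced by the zero symbol). The content lies in the converse.

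My plan for the converse is to compute the matrix of $T := \mathcal{V}_k^n \mathcal{S}_\phi^{k,n} = (\mathcal{V}_k^n)^2 M_\phi$ in the orthonormal basis $\{e_m\}_{m\in\mathbb{Z}^n}$ and then invoke the matrix characterisation in Theorem \ref{thm01}. Iterating the definition of $\mathcal{V}_k^n$ gives $(\mathcal{V}_k^n)^2 e_s = e_{s/k^2}$ if $k^2\mid s$ and $0$ otherwise. Writing $\phi(z) = \sum_{r\in\mathbb{Z}^n} a_r z^r$ and setting $r = k^2 t - m$, this yields
\[
T e_m \;=\; (\mathcal{V}_k^n)^2 \sum_{r\in\mathbb{Z}^n} a_r\, e_{r+m} \;=\; \sum_{t\in\mathbb{Z}^n} a_{k^2 t - m}\, e_t,
\]
hence $\langle T e_m, e_{m'}\rangle = a_{k^2 m' - m}$ for all $m,m'\in\mathbb{Z}^n$.

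Next, assuming $T$ is a $k^{th}$ order slant Hankel operator, Theorem \ref{thm01} gives $\langle T e_{m-k\varepsilon_j}, e_{m'+\varepsilon_j}\rangle = \langle T e_m, e_{m'}\rangle$ for every $m,m'\in\mathbb{Z}^n$ and $1\leq j\leq n$. Substituting the matrix entries computed above turns this into
\[
a_{k^2 m' - m + (k^2 + k)\varepsilon_j} \;=\; a_{k^2 m' - m}.
\]
Since $k^2 m' - m$ ranges over all of $\mathbb{Z}^n$ as $m,m'$ vary, this says $a_{\ell + (k^2+k)\varepsilon_j} = a_\ell$ for every $\ell\in\mathbb{Z}^n$ and every coordinate direction $j$.

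The final step — the only non-routine one — is to combine periodicity in each coordinate direction with the fact that $\phi\in L^\infty(\mathbb{T}^n)\subset L^2(\mathbb{T}^n)$, so $\sum_\ell |a_\ell|^2 <\infty$. If some $a_{\ell_0}\neq 0$, then periodicity would produce an entire sublattice of indices $\ell_0 + \sum_j p_j(k^2+k)\varepsilon_j$ on which $|a_\cdot|=|a_{\ell_0}|$, contradicting square-summability. Therefore $a_\ell=0$ for all $\ell$, and $\phi=0$. The anticipated obstacle is purely notational — keeping the two compositions of $\mathcal{V}_k^n$ straight so that the exponent $k^2 m' - m$ and the resulting period $k^2+k$ come out correctly.
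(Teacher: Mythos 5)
Your proposal is correct and follows essentially the same route as the paper: both compute $\big\langle (\mathcal{V}_k^n)^2 M_\phi\, e_m, e_{m'}\big\rangle = a_{k^2 m'-m}$, derive the periodicity relation $a_{\ell+(k^2+k)\varepsilon_j}=a_\ell$ from the matrix characterisation of slant Hankel operators, and conclude $\phi=0$ from the decay of Fourier coefficients (the paper sends $\Delta(k^2+k)\varepsilon_j\to\infty$ and uses $a_{-m+\Delta(k^2+k)\varepsilon_j}\to 0$, which is the same square-summability argument you give). No gaps.
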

\begin{proof}
Suppose, $ \mathcal{V}_k^n \mathcal{S}_{\phi}^{k,n} $ is a $ k^{th} $ order slant Hankel operator of level $n$. Then, for all $ m, m^{\prime} \in \mathbb{Z}^n,1 \leq j \leq n,$
\begin{eqnarray}\label{eqn2}
&& \big\langle \mathcal{V}_k^n \mathcal{S}_{\phi}^{k,n}e_{m-k{\varepsilon_j}},e_{m^{\prime} + {\varepsilon_j}}  \big\rangle = \big\langle \mathcal{V}_k^n \mathcal{S}_{\phi}^{k,n}e_m, e_m^{\prime} \big\rangle  \nonumber\\
&\Rightarrow & \bigg\langle \mathcal{V}_k^n \displaystyle \sum_{r \in \mathbb{Z}^n} a_{-kr - m + k \varepsilon_j}z^r,z^{m^{\prime} + \varepsilon_j}\bigg\rangle = \bigg\langle \mathcal{V}_k^n \displaystyle \sum_{r \in \mathbb{Z}^n} a_{-kr - m} z^r,z^{m^{\prime}} \bigg\rangle  \nonumber\\
&\Rightarrow &\bigg\langle \sum_{r \in \mathbb{Z}^n} a_{-k^2r-m+k\varepsilon_j}z^{-r}, z^{m^{\prime}+\varepsilon_j} \bigg\rangle = \bigg\langle \sum_{r \in \mathbb{Z}^n} a_{k^2r-m} z^r,z^{m^{\prime}}\bigg\rangle \nonumber\\
&\Rightarrow & a_{k^2 m^{\prime}- m + k^2 \varepsilon_j + k \varepsilon_j }= a_{k^2m^{\prime} -m }
\end{eqnarray}
Now, for each $j = 1, 2, \cdots ,n $ and $ k \in \mathbb{Z}^n $, $a_{-m} = a_{-m +(k^2 + k ) \varepsilon_j} = a_{-m +2(k^2 + k ) \varepsilon_j} = \cdots $. Here,  $ \Delta(k^2 + k )\varepsilon_j \rightarrow \infty $ as $ \Delta \rightarrow \infty $ and $ \phi \in L^\infty(\mathbb{T}^n) $ so, $  a_{-m +  \Delta(k^2 + k ) \varepsilon_j} \rightarrow 0 $ as $ \Delta \rightarrow \infty $. Therefore, $ a_m = 0 \;\; \forall m \in \mathbb{Z}^n  $ which implies $\phi = 0$.
\end{proof}
\begin{theorem}\label{thm4}
For $ \phi , \psi \in L^\infty(\mathbb{T}^n)$, the following are equivalent
\begin{enumerate}
\item[(a)] $\mathcal{S}_{\phi}^{k,n} \mathcal{S}_{\psi}^{k,n}  $ is a $k^{th}$ order slant Hankel operator of level $n$ 
\item[(b)] $ \phi(z^{-k}) \psi(z) = 0 $
\item[(c)] $\mathcal{S}_{\phi}^{k,n} \mathcal{S}_{\psi}^{k,n} = 0 $
\end{enumerate}
\end{theorem}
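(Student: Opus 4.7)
The plan is to push the inner $\mathcal{V}_k^n$ past the multiplication operator $M_\phi$ in $\mathcal{S}_\phi^{k,n}\mathcal{S}_\psi^{k,n} = \mathcal{V}_k^n M_\phi \mathcal{V}_k^n M_\psi$, rewriting the product in the shape $\mathcal{V}_k^n \mathcal{S}_\xi^{k,n}$ so that Theorem \ref{thm3} applies directly. Concretely, invoking Theorem \ref{thm1} with second symbol $\psi\equiv 1$ (legitimate since $\mathcal{S}_1^{k,n}=\mathcal{V}_k^n$) gives $M_\phi\mathcal{V}_k^n = \mathcal{V}_k^n M_{\phi(z^{-k})}$, hence
\[
\mathcal{S}_\phi^{k,n}\mathcal{S}_\psi^{k,n} \;=\; \mathcal{V}_k^n\mathcal{V}_k^n M_{\phi(z^{-k})}M_\psi \;=\; \mathcal{V}_k^n\,\mathcal{S}_{\phi(z^{-k})\psi(z)}^{k,n}.
\]
The symbol $\phi(z^{-k})\psi(z)$ genuinely lies in $L^\infty(\mathbb{T}^n)$ because $z^{-k}\in\mathbb{T}^n$ whenever $z\in\mathbb{T}^n$, so Theorem \ref{thm3} can be applied to it.

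With this reduction in hand, I would close the cycle (c) $\Rightarrow$ (a) $\Rightarrow$ (b) $\Rightarrow$ (c). The implication (c) $\Rightarrow$ (a) is immediate since $0=\mathcal{S}_0^{k,n}$ is a slant Hankel operator. For (a) $\Rightarrow$ (b), the hypothesis says $\mathcal{V}_k^n\mathcal{S}_{\phi(z^{-k})\psi(z)}^{k,n}$ is a $k^{th}$ order slant Hankel operator, and Theorem \ref{thm3} then forces the symbol $\phi(z^{-k})\psi(z)$ to vanish identically. For (b) $\Rightarrow$ (c), if $\phi(z^{-k})\psi(z)=0$, then $M_{\phi(z^{-k})\psi(z)}=0$, so $\mathcal{S}_{\phi(z^{-k})\psi(z)}^{k,n}=\mathcal{V}_k^n M_{\phi(z^{-k})\psi(z)}=0$, and the displayed identity yields $\mathcal{S}_\phi^{k,n}\mathcal{S}_\psi^{k,n}=0$.

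The only step that needs real attention is the reduction identity itself; after that, each of the three implications is a one-line consequence of a previously established result (Theorem \ref{thm3} for (a) $\Leftrightarrow$ (b), the linearity/injectivity of $\Omega$ recorded after Definition \ref{mydefn7} to pass between the symbol and the operator being zero). Because Theorem \ref{thm1} is a statement about all $\phi,\psi\in L^\infty(\mathbb{T}^n)$, there is no issue specializing $\psi\equiv 1$, and no further structural obstacle arises.
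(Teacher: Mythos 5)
Your proposal is correct and follows essentially the same route as the paper: both reduce the product to $\mathcal{V}_k^n\,\mathcal{S}_{\phi(z^{-k})\psi(z)}^{k,n}$ via Theorem \ref{thm1} (the paper applies $M_\phi\mathcal{S}_\psi^{k,n}=\mathcal{S}_{\phi(z^{-k})\psi(z)}^{k,n}$ directly, you specialize to $\psi\equiv 1$ and recombine, which is the same identity) and then invoke Theorem \ref{thm3} to link all three conditions. The only cosmetic difference is that you close the cycle (c)$\Rightarrow$(a)$\Rightarrow$(b)$\Rightarrow$(c) while the paper proves (a)$\Leftrightarrow$(b) and (b)$\Leftrightarrow$(c) separately.
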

\begin{proof}
First, we prove $ (a)\Leftrightarrow (b)$. Here, $ \mathcal{S}_{\phi}^{k,n} \mathcal{S}_{\psi}^{k,n} =  \mathcal{V}_k^n M_\phi \mathcal{S}_{\psi}^{k,n} = \mathcal{V}_k^n \mathcal{S}_{\phi(z^{-k})\psi(z)}^{k,n} $ (by Theorem \ref{thm1}) and by Theorem \ref{thm3}  $, \mathcal{S}_{\phi}^{k,n} \mathcal{S}_{\psi}^{k,n}  $ is a $k^{th}$ order slant Hankel operator of level $n$ if and only if $ \phi(z^{-k})\psi(z) = 0 $.

Next, we prove $ (b)\Leftrightarrow (c) $. Now, $ \mathcal{S}_{\phi}^{k,n} \mathcal{S}_{\psi}^{k,n} = 0 \Leftrightarrow  \mathcal{V}_k^n \mathcal{S}_{\phi(z^{-k})\psi(z)}^{k,n} = 0 \Leftrightarrow \mathcal{V}_k^n \mathcal{S}_{\phi(z^{-k})\psi(z)}^{k,n} $  is a $k^{th}$ order slant Hankel operator of level $n$ $ \Leftrightarrow \phi(z^{-k})\psi(z)= 0 $.
\end{proof}
\begin{corollary}
$ (\mathcal{S}_{\phi}^{k,n})^2 = \mathcal{S}_{\phi}^{k,n} $ if and only if $\phi = 0 $.
\end{corollary}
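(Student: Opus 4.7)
The plan is to reduce this to Theorem \ref{thm4} applied with $\psi=\phi$, together with the injectivity of the symbol map $\phi\mapsto \mathcal{S}_\phi^{k,n}$ noted in the preliminaries.

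The reverse implication is immediate: if $\phi=0$ then $\mathcal{S}_\phi^{k,n}=\mathcal{V}_k^n M_0=0$, so both sides of the equation are the zero operator and equality holds trivially. For the forward direction, suppose $(\mathcal{S}_\phi^{k,n})^2=\mathcal{S}_\phi^{k,n}$. Since $\mathcal{S}_\phi^{k,n}$ is itself a $k^{th}$ order slant Hankel operator of level $n$ by definition, the assumed equality tells us that the left-hand side $(\mathcal{S}_\phi^{k,n})^2=\mathcal{S}_\phi^{k,n}\mathcal{S}_\phi^{k,n}$ is a $k^{th}$ order slant Hankel operator of level $n$. Thus condition (a) of Theorem \ref{thm4}, with $\psi=\phi$, is satisfied.

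Invoking the equivalence $(a)\Leftrightarrow (c)$ in Theorem \ref{thm4} then yields $(\mathcal{S}_\phi^{k,n})^2=0$. Combining this with the standing hypothesis $(\mathcal{S}_\phi^{k,n})^2=\mathcal{S}_\phi^{k,n}$ forces $\mathcal{S}_\phi^{k,n}=0$. Finally, since the map $\Omega:L^\infty(\mathbb{T}^n)\to\mathcal{B}(L^2(\mathbb{T}^n))$ defined by $\Omega(\phi)=\mathcal{S}_\phi^{k,n}$ is linear and injective (as recorded in the preliminaries), $\mathcal{S}_\phi^{k,n}=0$ implies $\phi=0$, completing the proof.

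There is no substantive obstacle: the corollary is essentially a one-line consequence of Theorem \ref{thm4} (specialized to $\psi=\phi$), once one observes that an idempotent which also happens to equal a $k^{th}$ order slant Hankel operator must vanish, because the only such idempotent slant Hankel operator is $0$. The only mild care needed is to recognize that the equality in the hypothesis is precisely what guarantees condition (a) of Theorem \ref{thm4}, which one may otherwise not expect to apply, and to cite the injectivity of $\Omega$ at the end to pass from operator equality to equality of symbols.
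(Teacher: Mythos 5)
Your proof is correct and follows exactly the route the paper intends: the corollary is stated without proof immediately after Theorem \ref{thm4}, and the intended argument is precisely your specialization $\psi=\phi$, using $(a)\Leftrightarrow(c)$ to get $(\mathcal{S}_\phi^{k,n})^2=0$, hence $\mathcal{S}_\phi^{k,n}=0$, and then the injectivity of $\Omega$ recorded in the preliminaries. A small point in your favor: routing through $(c)$ rather than $(b)$ is the right choice, since $\phi(z^{-k})\phi(z)=0$ alone does not force $\phi=0$.
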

\begin{theorem}\label{thm5}
$ (\mathcal{S}_{\phi}^{k,n})^* $ is a $k^{th}$ order slant Hankel operator of level $n$  if and only if $\phi = 0 $. 
\end{theorem}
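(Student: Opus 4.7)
The plan is to mimic the strategy of Theorem \ref{thm3}: transfer the slant-Hankel requirement on $(\mathcal{S}_\phi^{k,n})^*$ into a translation-invariance condition on the Fourier coefficients of $\phi$, and then use the fact that $\phi \in L^\infty(\mathbb{T}^n) \subset L^2(\mathbb{T}^n)$ forces those coefficients to decay and hence vanish. The converse direction is trivial: if $\phi = 0$, then $\mathcal{S}_\phi^{k,n} = 0$, so $(\mathcal{S}_\phi^{k,n})^* = 0 = \mathcal{S}_0^{k,n}$, which is vacuously a slant Hankel operator.

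For the forward direction, I would first record the elementary computation
\[
\bigl\langle \mathcal{S}_\phi^{k,n} e_m, e_{m'} \bigr\rangle
= \bigl\langle \mathcal{V}_k^n M_\phi e_m, e_{m'}\bigr\rangle
= a_{-km'-m},
\]
obtained by expanding $M_\phi e_m = \sum_r a_r\, e_{r+m}$ and applying the definition of $\mathcal{V}_k^n$ (only the $r$ with $k \mid r+m$ and $-(r+m)/k = m'$ survives). Taking the complex conjugate via the adjoint relation gives
\[
\bigl\langle (\mathcal{S}_\phi^{k,n})^* e_m, e_{m'}\bigr\rangle
= \overline{\bigl\langle \mathcal{S}_\phi^{k,n} e_{m'}, e_m\bigr\rangle}
= \overline{a_{-km - m'}}.
\]
Next, assuming $(\mathcal{S}_\phi^{k,n})^*$ is a $k^{th}$ order slant Hankel operator of level $n$, I would invoke Theorem \ref{thm01} to write
\[
\bigl\langle (\mathcal{S}_\phi^{k,n})^* e_{m-k\varepsilon_j}, e_{m'+\varepsilon_j}\bigr\rangle
= \bigl\langle (\mathcal{S}_\phi^{k,n})^* e_m, e_{m'}\bigr\rangle
\]
for all $m, m' \in \mathbb{Z}^n$ and $1 \leq j \leq n$. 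Substituting the formula above, the two sides simplify to $\overline{a_{-km-m'+(k^2-1)\varepsilon_j}}$ and $\overline{a_{-km-m'}}$ respectively.

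Setting $s = -km - m'$ and letting $m'$ range over $\mathbb{Z}^n$, this reparametrizes to the translation identity
\[
a_{s+(k^2-1)\varepsilon_j} = a_s \qquad \forall\, s \in \mathbb{Z}^n,\ 1 \leq j \leq n.
\]
Iterating, $a_{s + \Delta(k^2-1)\varepsilon_j} = a_s$ for every $\Delta \in \mathbb{Z}$. Since $k \geq 2$ gives $k^2 - 1 \geq 3 \neq 0$, the norm of the index tends to infinity as $|\Delta|\to\infty$. Because $\phi \in L^\infty(\mathbb{T}^n) \subset L^2(\mathbb{T}^n)$, the sequence $(a_r)_{r \in \mathbb{Z}^n}$ lies in $\ell^2(\mathbb{Z}^n)$, so $a_{s + \Delta(k^2-1)\varepsilon_j} \to 0$. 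Hence $a_s = 0$ for every $s \in \mathbb{Z}^n$, giving $\phi = 0$.

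The main bookkeeping obstacle is simply the adjoint swap: one must be careful that after writing $\langle (\mathcal{S}_\phi^{k,n})^* e_a, e_b\rangle = \overline{a_{-kb-a}}$, the shifts $m \mapsto m - k\varepsilon_j$ and $m' \mapsto m' + \varepsilon_j$ combine into a net translation of $(k^2-1)\varepsilon_j$ (not $(k^2+k)\varepsilon_j$ as in Theorem \ref{thm3}), and that $m'$ alone already surjects the difference $s$ onto all of $\mathbb{Z}^n$ so the translation identity is universal rather than merely holding on a sublattice.
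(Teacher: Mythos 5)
Your proposal is correct and follows essentially the same route as the paper: both reduce the slant-Hankel condition on $(\mathcal{S}_\phi^{k,n})^*$ to the coefficient identity $a_{-km-m'+(k^2-1)\varepsilon_j}=a_{-km-m'}$ and then kill the coefficients by iterating the translation and using square-summability. Your write-up merely makes explicit the final decay step that the paper compresses into ``follows from Theorem \ref{thm3}.''
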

\begin{proof}
If $ \phi = 0 $, then $ \mathcal{S}_{\phi}^{k,n} = 0 = (\mathcal{S}_{\phi}^{k,n})^*  $. Let  $ (\mathcal{S}_{\phi}^{k,n})^* $  be a $k^{th}$ order slant Hankel operator of level $n$ . Then, $ \forall m, m^{\prime} \in \mathbb{Z}^n $
\begin{eqnarray}\label{eqn3}
&& \big\langle (\mathcal{S}_{\phi}^{k,n})^* e_{m-k{\varepsilon_j}},e_{m^{\prime} + {\varepsilon_j}} \big\rangle = \big\langle (\mathcal{S}_{\phi}^{k,n})^* e_m,e_{m^{\prime}} \big\rangle \nonumber\\
&\Rightarrow & \bigg\langle z^{m - k \varepsilon_j }, \mathcal{V}_k^n \displaystyle  \sum_{t \in \mathbb{Z}^n} a_t z^{t + m^\prime + \varepsilon_j}\bigg \rangle = \bigg \langle z^m, \mathcal{V}_k^n   \sum_{t \in \mathbb{Z}^n} a_t z^{t + m^\prime }\bigg \rangle \nonumber\\
&\Rightarrow &  \bigg \langle z^{m-k\varepsilon_j},  \sum_{t \in \mathbb{Z}^n} a_{-kt - m^\prime - \varepsilon_j} z^t  \bigg \rangle = \bigg \langle z^m, \sum_{t \in \mathbb{Z}^n} a_{-kt - m^\prime} z^t \bigg \rangle \nonumber\\
&\Rightarrow & a_{-km - m^\prime + (k^2 - 1 )\varepsilon_j} = a_{-mk - m^\prime } \;\; \forall m, m^\prime \in \mathbb{Z}^n \nonumber\\
&\Rightarrow &  a_{-km + (k^2 - 1)\varepsilon_j} = a_{-mk}.
\end{eqnarray}
 The proof  immediately follows from Theorem \ref{thm3} and hence  we obtain $ \phi =  0 $.
\end{proof}
\begin{theorem}\label{thm6}
The $k^{th} $ order slant Hankel operator of level $n$  $ \mathcal{S}_{\phi}^{k,n} $ is hyponormal if and only if $\phi = 0$.
\end{theorem}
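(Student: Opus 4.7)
The forward implication is immediate: if $\phi = 0$, then $\mathcal{S}_\phi^{k,n} = 0$, which is trivially hyponormal. For the converse the plan is to test the hyponormality inequality $\|(\mathcal{S}_\phi^{k,n})^* x\|^2 \leq \|\mathcal{S}_\phi^{k,n} x\|^2$ on the orthonormal basis $\{e_m\}_{m \in \mathbb{Z}^n}$ and extract from it a partition-style constraint on the Fourier coefficients of $\phi = \sum_r a_r z^r$.

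I first compute the two sides explicitly. For the adjoint, $(\mathcal{S}_\phi^{k,n})^* = M_{\bar\phi}(\mathcal{V}_k^n)^*$ sends $e_m$ to $\bar\phi(z)\, z^{-km}$; since multiplication by a unimodular monomial is an $L^2$-isometry on $\mathbb{T}^n$, one obtains $\|(\mathcal{S}_\phi^{k,n})^* e_m\|^2 = \|\phi\|_2^2$, the same constant for every $m$. For the direct side, applying $\mathcal{V}_k^n$ term by term to $\phi(z)\, z^m = \sum_r a_r z^{r+m}$ kills every monomial whose exponent is not divisible by $k$; reindexing $r+m = -ks$ gives $\mathcal{S}_\phi^{k,n} e_m = \sum_{s \in \mathbb{Z}^n} a_{-ks-m}\, z^{s}$, so $\|\mathcal{S}_\phi^{k,n} e_m\|^2 = \sum_{t \equiv -m\,(\mathrm{mod}\, k)} |a_t|^2$, i.e.\ the $\ell^2$-mass of the Fourier coefficients lying in the componentwise residue class of $-m$ modulo $k$.

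The closing step is a simple counting argument. Writing $S_c = \sum_{t \equiv c\,(\mathrm{mod}\, k)} |a_t|^2$ for each residue class $c \in (\mathbb{Z}/k\mathbb{Z})^n$, the two computations above combined with hyponormality give $S_c \geq \|\phi\|_2^2$ for every $c$. But the classes partition $\mathbb{Z}^n$, so $\sum_c S_c = \|\phi\|_2^2$, and summing the $k^n$ inequalities yields $k^n \|\phi\|_2^2 \leq \|\phi\|_2^2$. Since $k \geq 2$ and $n \geq 1$ force $k^n - 1 \geq 1$, we conclude $\|\phi\|_2^2 = 0$, hence $\phi = 0$.

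The main (really only) substantive step is recognizing how $\mathcal{V}_k^n$ partitions the Fourier coefficients of $\phi$ into residue classes modulo $k$; the rest is bookkeeping. There is no genuine obstacle, because hyponormality is massively over-determined here: $\|(\mathcal{S}_\phi^{k,n})^* e_m\|$ is constant in $m$ and equals the \emph{full} $L^2$-norm of $\phi$, whereas $\|\mathcal{S}_\phi^{k,n} e_m\|^2$ only sees one of the $k^n \geq 2$ equally legitimate pieces of that norm, so the two can be compatible only in the trivial case.
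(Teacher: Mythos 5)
Your proof is correct and follows essentially the same route as the paper: test the hyponormality inequality on the basis vectors $e_m$, observe that $\|(\mathcal{S}_\phi^{k,n})^* e_m\|^2=\|\phi\|_2^2$ while $\|\mathcal{S}_\phi^{k,n} e_m\|^2$ is the $\ell^2$-mass of a single residue class of Fourier coefficients modulo $k$. Your finish (summing the $k^n$ inequalities over all residue classes) is a cleaner, more symmetric version of the paper's two-case argument ($k\mid m$ first, then $k\nmid m$), but the underlying idea is identical.
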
 
\begin{proof}
Let $ f(z) = e_m(z) $ and $\phi(z) = \displaystyle\sum_{r \in \mathbb{Z}^n} a_r z^r $. Let  $ \mathcal{S}_{\phi}^{k,n} $ be a hyponormal operator then for $ f \in L^2(\mathbb{T}^n)$,
\begin{equation}\label{myeqn4}
\big\Vert \mathcal{S}_{\phi}^{k,n}  f(z)\big\Vert \geq \big \Vert (\mathcal{S}_{\phi}^{k,n})^* f(z) \big \Vert \Rightarrow \big\Vert \mathcal{V}_k^n \phi(z)\big\Vert \geq \big\Vert \overline{\phi(z)}(\mathcal{V}_k^n)^* f(z) \big\Vert
\end{equation}
\begin{enumerate}
\item[Case 1.]\label{case1} $k \vert m $. {\rm From equation (\ref{myeqn4}) we get},
\begin{eqnarray}
&& \bigg \Vert \mathcal{V}_k^n) \displaystyle \sum_{r \in \mathbb{Z}^n} a_r z^{r + m } \bigg \Vert \geq \bigg \Vert \sum_{r \in \mathbb{Z}^n} \overline{a}_r z^{-r -km}\bigg \Vert \nonumber\\
&\Rightarrow &\bigg \Vert \sum_{r \in \mathbb{Z}^n} a_{-kr -m} z^r \bigg \Vert^2 \geq \bigg \Vert \sum_{r \in \mathbb{Z}^n} \overline{a}_ {-r - km } z^r \bigg \Vert^2 \nonumber\\
&\Rightarrow & \sum_{r \in \mathbb{Z}^n} \vert a_{-kr -m} \vert^2 \geq \sum_{r \in \mathbb{Z}^n} \vert a_{-r - km } \vert^2 \;\; \forall m \in \mathbb{Z}^n \nonumber\\
&\Rightarrow & \sum_{r \in \mathbb{Z}^n} \vert a_{kr} \vert^2 \geq \sum_{r \in \mathbb{Z}^n} \vert a_r \vert^2 \nonumber\\
&\Rightarrow &\sum_{r \in \mathbb{Z}^n, k \nmid r}  \vert a_r \vert^2 \leq 0 \nonumber\\
&\Rightarrow & \; a_r = 0 \;\; \forall r \in \mathbb{Z}^n, k \nmid r.
\end{eqnarray}
\item[Case 2.]\label{case2} $k \nmid m$. {\rm By equation (\ref{myeqn4})},
\begin{flalign} \label{eqn6}
0 \geq  \bigg \Vert \displaystyle \sum_{r \in \mathbb{Z}^n} \overline{a}_r z^{-(r + km)} \bigg \Vert^2 \Rightarrow \sum_{r \in \mathbb{Z}^n} \vert a_{r - km }\vert^2 \leq 0.
\end{flalign}
\rm If $k \nmid r $, then $ a_r = 0 \;\; \forall r \in \mathbb{Z}^n $ ( by case \ref{case1}). Again, if $ k \mid r $, then by equation (\ref{eqn6})  $\displaystyle \sum_{r \in \mathbb{Z}^n} \vert a_r \vert^2 \leq 0  \Rightarrow a_r = 0 \;\; \forall r \in \mathbb{Z}^n $.
\end{enumerate}
Hence, by case [\ref{case1}] and case [\ref{case2}], we obtain $ \phi = 0$
\end{proof}
\begin{theorem}\label{thm7}
The zero operator is the only compact $ k^{th} $ order slant Hankel operator of level $n$, on $ L^2(\mathbb{T}^n)$.
\end{theorem}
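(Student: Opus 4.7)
The plan is to combine Lemma \ref{lemma2} with compactness and the fact that the standard orthonormal basis $(e_r)_{r \in \mathbb{Z}^n}$ tends weakly to zero as $|r| \to \infty$. The key observation I want to exploit is that the commutation relation $M_{z^m}\mathcal{S}_\phi^{k,n} = \mathcal{S}_\phi^{k,n} M_{z^{-km}}$ pairs with the unitarity of each $M_{z^m}$ to yield a translation-invariance of the quantity $\|\mathcal{S}_\phi^{k,n} e_{m'}\|$ along shifts $m' \mapsto m' - km$. Pushing this shift to infinity collides with compactness and forces the operator to vanish on every basis vector.

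Concretely, I would assume $\mathcal{S}_\phi^{k,n}$ is compact and invoke Lemma \ref{lemma2} to write $M_{z^m}\mathcal{S}_\phi^{k,n} = \mathcal{S}_\phi^{k,n} M_{z^{-km}}$ for every $m \in \mathbb{Z}^n$. Since every $M_{z^m}$ is unitary on $L^2(\mathbb{T}^n)$, taking norms after applying both sides to $f = e_{m'}$ and using $M_{z^{-km}}e_{m'} = e_{m'-km}$ gives
\[
\|\mathcal{S}_\phi^{k,n} e_{m' - km}\| = \|\mathcal{S}_\phi^{k,n} e_{m'}\| \quad \text{for all } m, m' \in \mathbb{Z}^n.
\]
Fixing $m'$ and letting $m = m_N$ run through any sequence with $|m_N| \to \infty$, the vectors $e_{m' - km_N}$ are an orthonormal sequence and therefore tend weakly to $0$; compactness of $\mathcal{S}_\phi^{k,n}$ forces $\|\mathcal{S}_\phi^{k,n} e_{m' - km_N}\| \to 0$. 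Combined with the invariance above this yields $\|\mathcal{S}_\phi^{k,n} e_{m'}\| = 0$ for every $m' \in \mathbb{Z}^n$, so $\mathcal{S}_\phi^{k,n}$ annihilates every basis vector and is the zero operator (and, by the injectivity of $\Omega$ remarked on after Definition \ref{mydefn7}, $\phi = 0$).

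There is no deep obstacle here; the only subtle point worth flagging is that the invariance is produced along shifts of the form $-km$ rather than arbitrary shifts (the commutation $M_{z^m}\mathcal{S}_\phi^{k,n} = \mathcal{S}_\phi^{k,n}M_{z^{-km}}$ is asymmetric in $m$), but since $m$ ranges over all of $\mathbb{Z}^n$, the shifts $-km$ still sweep out a sublattice of $\mathbb{Z}^n$ whose translates escape every bounded set, which is all that is needed to trigger weak-nullity of the basis sequence. Citing Lemma \ref{lemma2} at the outset keeps the argument much shorter than working directly from Definition \ref{mydefn7} and the explicit action of $\mathcal{V}_k^n$.
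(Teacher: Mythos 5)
Your proof is correct, and it takes a genuinely different route from the paper's. The paper fixes $t=1,2,\dots,k^2-1$, uses the ideal property of the compacts to get that $\mathcal{S}_{\phi}^{k,n}M_{z^t}$ and hence $(\mathcal{S}_{\phi}^{k,n}M_{z^t})^{*}$ are compact, then left-composes with $\mathcal{V}_k^n$ to exhibit a multiplication operator $M_{\chi}$ with $\chi=\mathcal{V}_k^n(\overline{z^{t}\phi})$; since the only compact multiplication operator on $L^2(\mathbb{T}^n)$ is zero, $\chi=0$, and reading off Fourier coefficients gives $\overline{a}_{km-t}=0$ for all $m$ and all $t$, whence $\phi=0$ and $\mathcal{S}_{\phi}^{k,n}=0$. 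You instead never touch the symbol: the intertwining relation of Lemma \ref{lemma2} together with the unitarity of $M_{z^m}$ makes $\bigl\Vert \mathcal{S}_{\phi}^{k,n}e_{m'}\bigr\Vert$ constant along the coset $m'+k\mathbb{Z}^n$, and since an orthonormal sequence is weakly null, compactness forces that constant to be $0$ for every $m'$. Your version is shorter, applies verbatim to \emph{any} bounded operator satisfying the relation $M_{z^m}S=SM_{z^{-km}}$ (so it does not use the factorization $\mathcal{V}_k^n M_\phi$ at all), and sidesteps the bookkeeping the paper needs to check that the indices $km-t$ sweep out all of $\mathbb{Z}^n$ as $t$ ranges over $1,\dots,k^2-1$ --- a point the paper leaves somewhat implicit in the multi-index setting. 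What the paper's route buys is the explicit conclusion $\phi=0$ directly from the Fourier coefficients; in your argument that last step is delegated to the injectivity of $\phi\mapsto\mathcal{S}_{\phi}^{k,n}$ noted after Definition \ref{mydefn7}, which is a legitimate substitute. Note also that the paper's key auxiliary fact (a compact $M_\chi$ must vanish) is itself proved by exactly the weak-convergence argument you use, so in effect you have inlined that lemma and applied it one level earlier.
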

\begin{proof}
Let  $ \mathcal{S}_{\phi}^{k,n} $ be a compact operator. Since the ideal space of bounded linear operators on the Hilbert space are compact, therefore $ \mathcal{S}_{\phi}^{k,n} M_{z^t}$ is compact where $ t = 1, 2, \cdots ,k^2-1 $ implies $ (\mathcal{S}_{\phi}^{k,n} M_{z^t})^*$ is compact. Now, $ (\mathcal{S}_{\phi}^{k,n} M_{z^t})^* = (\mathcal{V}_k^n M_{z^t \phi})^* =( \mathcal{S}_{z^t\phi}^{k,n})^*$. So, $ \mathcal{V}_k^n (\mathcal{S}_{\phi}^{k,n} M_{z^t})^* e_q  =  \mathcal{V}_k^n ( \mathcal{S}_{z^t\phi}^{k,n})^* e_q =   \mathcal{V}_k^n (M_{\overline{z^t \phi}}\;\;e_{-kq}) = \mathcal{V}_k^n(\overline{z^t \phi}) e_q  = M_\chi$ where $ \chi = \mathcal{V}_k^n(\overline{z^t \phi})$. Since, $ (\mathcal{S}_{\phi}^{k,n} M_{z^t})^*$ is compact then $ \mathcal{V}_k^n (\mathcal{S}_{\phi}^{k,n} M_{z^t})^*$  is also compact which implies $ M_\chi$  is compact. Thus, $\chi = 0 $. 

Finally, $ \big\langle \mathcal{V}_k^n(\overline{z^t \phi}), z^m \big\rangle = 0\Rightarrow \bigg\langle  z^{-t} \displaystyle \sum_{r \in \mathbb{Z}^n} \overline{a}_r z^{-r}, z^{-km}\bigg\rangle  = 0 $\\
$\Rightarrow \sum_{r \in \mathbb{Z}^n} \overline{a}_{-r-t} z^r, z^{-km} = 0 \Rightarrow \overline{a}_{km -t } = 0 \;\; \forall ~m \in \mathbb{Z}^n $ and $ t = 1, 2,\cdots ,k^2 - 1 $. So, $ a_r = 0 \;\; \forall ~r \in  \mathbb{Z}^n $ which implies $ \phi = 0$ and hence $ \mathcal{S}_{\phi}^{k,n} = 0 $.
\end{proof}

\begin{theorem}
$  \mathcal{S}_{\phi}^{k,n} \mathcal{S}_{\psi}^{k,n} =  \mathcal{S}_{\psi}^{k,n} \mathcal{S}_{\phi}^{k,n} $ if and only if $ \phi(z^{-k}) \psi(z) = \psi(z^{-k}) \phi(z)$ for $\phi, \psi \in L^\infty(\mathbb{T}^n)$.
\end{theorem}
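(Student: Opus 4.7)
The plan is to reduce both composite operators to the form $\mathcal{V}_k^n$ applied to a single slant Hankel, exploiting the intertwining identity given in Theorem~\ref{thm1}. Writing
\[
\mathcal{S}_{\phi}^{k,n}\mathcal{S}_{\psi}^{k,n} \;=\; \mathcal{V}_k^n M_\phi \,\mathcal{S}_{\psi}^{k,n},
\]
and applying Theorem~\ref{thm1} to $M_\phi \mathcal{S}_{\psi}^{k,n}$ yields $\mathcal{S}_{\phi}^{k,n}\mathcal{S}_{\psi}^{k,n} = \mathcal{V}_k^n\,\mathcal{S}_{\phi(z^{-k})\psi(z)}^{k,n}$. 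By the same computation with the roles of $\phi$ and $\psi$ interchanged, $\mathcal{S}_{\psi}^{k,n}\mathcal{S}_{\phi}^{k,n} = \mathcal{V}_k^n\,\mathcal{S}_{\psi(z^{-k})\phi(z)}^{k,n}$. This is exactly the same reduction already used in the proof of Theorem~\ref{thm4}.

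The forward (sufficiency) direction is then immediate: if $\phi(z^{-k})\psi(z) = \psi(z^{-k})\phi(z)$, then the two right-hand sides above are literally the same operator, so $\mathcal{S}_{\phi}^{k,n}\mathcal{S}_{\psi}^{k,n} = \mathcal{S}_{\psi}^{k,n}\mathcal{S}_{\phi}^{k,n}$.

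For the converse, I subtract the two expressions. The linearity of the assignment $\chi \mapsto \mathcal{S}_{\chi}^{k,n}$ (recall $\Omega$ is linear, as stated in the preliminaries) gives
\[
\mathcal{S}_{\phi}^{k,n}\mathcal{S}_{\psi}^{k,n} - \mathcal{S}_{\psi}^{k,n}\mathcal{S}_{\phi}^{k,n} \;=\; \mathcal{V}_k^n\,\mathcal{S}_{\phi(z^{-k})\psi(z)-\psi(z^{-k})\phi(z)}^{k,n}.
\]
Assuming the commutator vanishes, we obtain $\mathcal{V}_k^n\,\mathcal{S}_{\eta}^{k,n} = 0$ with $\eta = \phi(z^{-k})\psi(z)-\psi(z^{-k})\phi(z)$. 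Since the zero operator is trivially a $k^{th}$ order slant Hankel operator of level $n$, Theorem~\ref{thm3} forces $\eta = 0$, i.e.\ $\phi(z^{-k})\psi(z) = \psi(z^{-k})\phi(z)$.

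The only real step that needs care is the final appeal: passing from the operator identity $\mathcal{V}_k^n\mathcal{S}_{\eta}^{k,n}=0$ to the function identity $\eta=0$. This is the hard part since $\mathcal{V}_k^n$ is not injective, so one cannot simply ``cancel'' it. Fortunately Theorem~\ref{thm3} is precisely tailored to this cancellation, so the obstacle is already dispatched by prior work in the paper; otherwise one would have to compute $\langle \mathcal{V}_k^n\mathcal{S}_{\eta}^{k,n} e_m, e_{m'}\rangle = \overline{c}_{k^2 m'-m}$ directly in the orthonormal basis and vary $m,m'$ over $\mathbb{Z}^n$ to conclude that every Fourier coefficient of $\eta$ vanishes.
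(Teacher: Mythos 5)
Your proposal is correct and follows essentially the same route as the paper: both reduce the commutator to $\mathcal{V}_k^n\,\mathcal{S}_{\phi(z^{-k})\psi(z)-\psi(z^{-k})\phi(z)}^{k,n}$ (the paper writes this as $\mathcal{S}_{1}^{k,n}\mathcal{S}_{\phi(z^{-k})\psi(z)-\psi(z^{-k})\phi(z)}^{k,n}$, which is the same operator since $\mathcal{S}_1^{k,n}=\mathcal{V}_k^n$) and then invoke Theorem~\ref{thm3}, using that the zero operator is itself a slant Hankel operator, to conclude the symbol vanishes. Your explicit remark about why one cannot simply cancel the non-injective $\mathcal{V}_k^n$ is a point the paper leaves implicit, but the argument is the same.
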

\begin{proof} We have
\begin{flalign}\label{eqn7}
 \mathcal{S}_{\phi}^{k,n} \mathcal{S}_{\psi}^{k,n} -  \mathcal{S}_{\psi}^{k,n} \mathcal{S}_{\phi}^{k,n} = & \; \mathcal{V}_k^n M_\phi \mathcal{V}_k^n M_\psi - \mathcal{V}_k^n M_\psi \mathcal{V}_k^n M_\phi \cr = & \; \mathcal{V}_k^n \mathcal{V}_k^n \big( M_{\phi(z^{-k}) \psi(z)} - M_{\psi(z^{-k}) \phi(z)} \cr = & \;   \mathcal{S}_{1}^{k,n}  \mathcal{S}_{\phi(z^{-k}) \psi(z) - \psi(z^{-k}) \phi(z)}^{k,n}.
\end{flalign}
Now, $  \mathcal{S}_{1}^{k,n}  \mathcal{S}_{\phi(z^{-k}) \psi(z) - \psi(z^{-k}) \phi(z)}^{k,n} = 0  $ if and only if $ \phi(z^{-k}) \psi(z) = \psi(z^{-k}) \phi(z) $  by Theorem \ref{thm3}.
\end{proof}
 
\begin{theorem} \label{thm9}
For $\phi, \psi \in L^\infty(T)^n $, the product of two $k^{th}$ order slant Hankel  operator of level $n$  $ \mathcal{S}_{\phi}^{k,n}  \mathcal{S}_{\psi}^{k,n} $ is compact if and only if $ \phi(z^{-k}) \psi(z) = 0 .$
\end{theorem}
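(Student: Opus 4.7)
The backward direction is essentially free: if $\phi(z^{-k})\psi(z)=0$ then by Theorem \ref{thm4} we have $\mathcal{S}_\phi^{k,n}\mathcal{S}_\psi^{k,n}=0$, which is trivially compact. All the work is in the forward implication.

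For the forward direction, the plan is to start from the identity already exploited in Theorem \ref{thm4}. Setting $\eta(z)=\phi(z^{-k})\psi(z)\in L^\infty(\mathbb{T}^n)$, I would use Theorem \ref{thm1} to write
\[ T:=\mathcal{S}_\phi^{k,n}\mathcal{S}_\psi^{k,n}=\mathcal{V}_k^n\,\mathcal{S}_\eta^{k,n}=(\mathcal{V}_k^n)^2 M_\eta. \]
The idea is to evaluate $T$ on the orthonormal basis $\{e_m\}_{m\in\mathbb{Z}^n}$ and read off the Fourier coefficients of $\eta$ from the norms $\|Te_m\|$.

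A direct computation using the definition of $\mathcal{V}_k^n$ (twice), writing $\eta(z)=\sum_{r\in\mathbb{Z}^n}\eta_r z^r$, should yield
\[ Te_m=\sum_{u\in\mathbb{Z}^n}\eta_{k^2u-m}\,z^u,\qquad \|Te_m\|^2=\sum_{u\in\mathbb{Z}^n}|\eta_{k^2u-m}|^2. \]
Here the first application of $\mathcal{V}_k^n$ keeps only those monomials $z^{r+m}$ whose exponent is divisible by $k$, the second application imposes a further divisibility by $k$, and the two substitutions combine to produce the shift $r=k^2u-m$.

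Next I would invoke compactness. Since $\{e_m\}$ is an orthonormal system, $e_m$ converges weakly to $0$ as $|m|\to\infty$ in $\mathbb{Z}^n$, and compact operators send weakly null sequences to norm null sequences; hence $\|Te_m\|\to 0$. To finish, fix an arbitrary $r\in\mathbb{Z}^n$ and pick the sequence $m_N=k^2 N\mathbf{1}-r$ with $\mathbf{1}=(1,\dots,1)$. Then $|m_N|\to\infty$, and taking $u=N\mathbf{1}$ in the above sum shows that $|\eta_r|^2$ appears as one of its nonnegative summands, so $|\eta_r|^2\le\|Te_{m_N}\|^2\to 0$. Thus $\eta_r=0$ for every $r\in\mathbb{Z}^n$, giving $\eta\equiv 0$, i.e., $\phi(z^{-k})\psi(z)=0$.

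The main technical obstacle is the clean identification $\|Te_m\|^2=\sum_u|\eta_{k^2u-m}|^2$; this multi-index bookkeeping (two successive applications of $\mathcal{V}_k^n$ on the polydisk) is the one place where one must be careful. Everything after that — weak-to-norm convergence under a compact operator and uniqueness of Fourier coefficients — is standard and dimension-free.
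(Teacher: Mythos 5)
Your argument is correct, and the forward direction takes a genuinely different route from the paper's. Both proofs start from the same factorization $\mathcal{S}_\phi^{k,n}\mathcal{S}_\psi^{k,n}=(\mathcal{V}_k^n)^2 M_\eta$ with $\eta(z)=\phi(z^{-k})\psi(z)$, but from there the paper passes to the adjoint, left-multiplies by $(\mathcal{V}_k^n)^2$ to exhibit $(\mathcal{V}_k^n)^2(\mathcal{S}_\phi^{k,n}\mathcal{S}_\psi^{k,n})^*$ as a multiplication operator $M_\Lambda$ with $\Lambda=(\mathcal{V}_k^n)^2\big(\overline{\psi}\,\overline{\phi}(z^{-k})\big)$, concludes $\Lambda=0$ from the fact that a compact multiplication operator vanishes, and — since this only kills the Fourier coefficients of $\eta$ at indices divisible by $k^2$ — must repeat the argument with $\mathcal{S}_\phi^{k,n}\mathcal{S}_\psi^{k,n}M_{z^t}$ for $t=1,\dots,k^2-1$ to sweep the remaining residue classes. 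You instead apply the compact operator directly to the orthonormal basis: your formula $Te_m=\sum_{u}\eta_{k^2u-m}z^u$ checks out (two applications of $\mathcal{V}_k^n$ each force divisibility by $k$ and reverse sign, yielding the index $k^2u-m$), and the choice $m_N=k^2N\mathbf{1}-r$ isolates $|\eta_r|^2$ inside $\|Te_{m_N}\|^2\to 0$, so all Fourier coefficients are handled at once with no residue-class bookkeeping. Your version is more direct and avoids both the adjoint computation and the auxiliary operators $M_{z^t}$; the paper's version has the mild advantage of reusing its Theorem \ref{thm7} machinery (compact multiplication operators are zero) verbatim. The backward direction is identical in both: Theorem \ref{thm4} gives $\mathcal{S}_\phi^{k,n}\mathcal{S}_\psi^{k,n}=0$.
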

\begin{proof}
Let  $ \mathcal{S}_{\phi}^{k,n}  \mathcal{S}_{\psi}^{k,n} $ be compact, then both $ (\mathcal{S}_{\phi}^{k,n}  \mathcal{S}_{\psi}^{k,n})^* $ and $ (\mathcal{V}_k^n)^2 (\mathcal{S}_{\phi}^{k,n}  \mathcal{S}_{\psi}^{k,n})^* $ are compact. Now,
\begin{flalign}\label{eqn8}
(\mathcal{V}_k^n)^2 (\mathcal{S}_{\phi}^{k,n}  \mathcal{S}_{\psi}^{k,n})^* e_m(z) = & \; (\mathcal{V}_k^n)^2 M_{\overline{\psi}} (\mathcal{V}_k^n)^* M_{\overline{\phi}}  (\mathcal{V}_k^n)^* z^m \cr 
= & \; (\mathcal{V}_k^n)^2 M_{\overline{\psi} \overline{\phi}(z^{-k})} \big\lbrace(\mathcal{V}_k^n)^*\big\rbrace^2 z^m \cr 
= & \; (\mathcal{V}_k^n)^2 \big(\overline{\psi} \overline{\phi}(z^{-k}) z^{k^2m}\big) \cr
= & \;  \mathcal{V}_k^n \big\lbrace z^{-km} \mathcal{V}_k^n  \big( \overline{\psi} \overline{\phi}(z^{-k}) \big)\big\rbrace \cr
= & \; (\mathcal{V}_k^n)^2 \big( \overline{\psi} \overline{\phi}(z^{-k}) \big)z^m \cr
= & \; M_\Lambda e_m(z)
\end{flalign}
 where $ \Lambda = (\mathcal{V}_k^n)^2 \big( \overline{\psi} \overline{\phi}(z^{-k}) \big) $.  Since, $ (\mathcal{V}_k^n)^2 (\mathcal{S}_{\phi}^{k,n}  \mathcal{S}_{\psi}^{k,n})^* $ is compact then  $ \Lambda = 0 $.
 
 So, $ \big\langle  \Lambda,  e_m(z) \big\rangle =  \big\langle (\mathcal{V}_k^n)^2 \big( \overline{\psi} \overline{\phi}(z^{-k}) \big), z^m \big\rangle =  \big\langle \overline{\psi} \overline{\phi}(z^{-k}, z^{k^2m} \big\rangle  = \overline{a}_{k^2m} $ where ${\psi} {\phi}(z^{-k}) = \displaystyle \sum_{m \in \mathbb{Z}^n}a_m z^m $.  Hence, $  \overline{a}_{k^2m} = 0 \;\; \forall m \in \mathbb{Z}^n $.
 
 Again, let us consider $\mathcal{S}_{\phi}^{k,n}  \mathcal{S}_{\psi}^{k,n} M_{z^t},\;  t = 1, 2, \cdots, k^2-1. $ Since, $\mathcal{S}_{\phi}^{k,n}  \mathcal{S}_{\psi}^{k,n} $ is compact so, $\mathcal{S}_{\phi}^{k,n}  \mathcal{S}_{\psi}^{k,n} M_{z^t} $ is also compact for $ t = 1, 2, \cdots, k^2-1. $ We have,
\begin{eqnarray}
&& (\mathcal{V}_k^n)^2  (\mathcal{S}_{\phi}^{k,n}  \mathcal{S}_{\psi}^{k,n} M_{z^t})^* \nonumber\\
 &= &\;  (\mathcal{V}_k^n)^2 M_{\overline{z}^t} ( \mathcal{S}_{\psi}^{k,n})^* (\mathcal{S}_{\phi}^{k,n})^* \cr
& = &\;  (\mathcal{V}_k^n)^2 \overline{z}^t ( \mathcal{S}_{\psi}^{k,n})^* (\mathcal{S}_{\phi}^{k,n})^* \cr
& = &\; M_\Omega, \; {\rm where} \; \Omega = (\mathcal{V}_k^n)^2 \big(\overline{z}^t \overline{\psi} \overline{\phi}(z^{-k}) \big) [\rm \;by  \; equation \; (\ref{eqn8})].
\end{eqnarray}
 $ M_\Omega $ is compact implies $ \Omega = 0 $. Now, $ \big\langle  \Omega, e_m(z)\big\rangle = \big\langle (\mathcal{V}_k^n)^2 \big(\overline{z}^t \overline{\psi} \overline{\phi}(z^{-k}) \big), z^m \big\rangle = \big\langle  \overline{\psi} \overline{\phi}(z^{-k}), z^{k^2m  + t }\big\rangle = a_{-k^2m-t}$. Therefore, $ a_{-k^2m-t} = 0 $. From the above results, we get $ \psi(z) \phi(z^{-k})  = 0.$ \\[1mm] 
\noindent The proof of the converse part follows directly from Theorems \ref{thm4} and \ref{thm7}.
\end{proof}

\begin{theorem}
For any $ \phi \in L^\infty(\mathbb{T}^n)$, the $ k^{th}$ order slant Hankel operator of level $n$, $ \mathcal{S}_{\phi}^{k,n}$ satisfies the following properties
\begin{enumerate}
\item[(a)] $ \mathcal{S}_{1}^{k,n}  \mathcal{S}_{\phi}^{k,n} = 0 $  if and only if $ \phi = 0 $.
\item[(b)] $ \mathcal{S}_{1}^{k,n}  \mathcal{S}_{\phi}^{k,n}  $ is compact if and only if  $ \phi = 0 $.
\end{enumerate}
\end{theorem}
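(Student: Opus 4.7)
The plan is to derive both parts as immediate specializations of Theorems \ref{thm4} and \ref{thm9}, by taking the first operator's symbol to be the constant function $1 \in L^\infty(\mathbb{T}^n)$. Observe that $\mathcal{S}_1^{k,n} = \mathcal{V}_k^n M_1 = \mathcal{V}_k^n$, a bounded operator of norm $1$ (as recorded just after Definition \ref{mydefn7}), so the composite $\mathcal{S}_1^{k,n}\mathcal{S}_\phi^{k,n}$ is certainly well defined on $L^2(\mathbb{T}^n)$, and it is a legitimate instance of the operator products studied in Section 3.

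For part (a) I will apply Theorem \ref{thm4} with the pair $(1,\phi)$ substituted for the symbols $(\phi,\psi)$ appearing there. The symbol-level condition $1(z^{-k})\cdot\phi(z) = 0$ collapses to $\phi \equiv 0$, and the equivalence (b)$\Leftrightarrow$(c) of that theorem then reads $\mathcal{S}_1^{k,n}\mathcal{S}_\phi^{k,n} = 0 \Leftrightarrow \phi = 0$, which is exactly (a). For part (b), the same substitution into Theorem \ref{thm9} gives, word for word, the equivalence that $\mathcal{S}_1^{k,n}\mathcal{S}_\phi^{k,n}$ is compact iff $\phi = 0$, which is (b).

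No genuine obstacle arises: both claims are corollaries once one notices that $\mathcal{S}_1^{k,n}\mathcal{S}_\phi^{k,n}$ is a special instance of the general product $\mathcal{S}_\phi^{k,n}\mathcal{S}_\psi^{k,n}$ whose vanishing and compactness have already been characterized. If a self-contained derivation were preferred, one could instead write $\mathcal{S}_1^{k,n}\mathcal{S}_\phi^{k,n} = (\mathcal{V}_k^n)^2 M_\phi$ and adapt the Fourier-coefficient argument from Theorem \ref{thm3} (and the proof of Theorem \ref{thm9}) to recover the same conclusions, but invoking Theorems \ref{thm4} and \ref{thm9} is by far the shortest route.
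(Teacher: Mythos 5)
Your proposal is correct, and for part (b) it coincides exactly with the paper, which likewise disposes of compactness by citing Theorem \ref{thm9}. For part (a) you take a genuinely shorter route: you specialize the equivalence (b)$\Leftrightarrow$(c) of Theorem \ref{thm4} to the symbol pair $(1,\phi)$, so that the vanishing condition $1(z^{-k})\,\phi(z)=0$ collapses to $\phi=0$ and the claim drops out with no computation. The paper instead argues directly: it writes $\mathcal{S}_{1}^{k,n}\mathcal{S}_{\phi}^{k,n}=(\mathcal{V}_k^n)^2 M_\phi$, computes the matrix entries $\big\langle (\mathcal{V}_k^n)^2 M_\phi\, z^m, z^{m'}\big\rangle = a_{k^2 m' - m}$, and concludes from their vanishing for all $m,m'$ that every Fourier coefficient of $\phi$ is zero. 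The two arguments are close cousins --- the paper's computation is essentially a re-derivation, in this special case, of what Theorem \ref{thm4} already packages --- but yours has the virtue of exposing the theorem as a literal corollary of results already proved, while the paper's has the virtue of being self-contained and not depending on the chain Theorem \ref{thm4} $\to$ Theorem \ref{thm3}. You even flag the direct $(\mathcal{V}_k^n)^2 M_\phi$ computation as the fallback, which is precisely the paper's argument, so nothing is missing.
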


\begin{proof}
Let $\phi(z)= \displaystyle \sum_{m \in \mathbb{Z}^n} a_m z^m $. To prove $(a)$ let us assume that $ \mathcal{S}_{1}^{k,n}  \mathcal{S}_{\phi}^{k,n} = 0 $. Then, $ \mathcal{S}_{1}^{k,n}  \mathcal{S}_{\phi}^{k,n} = (\mathcal{V}_k^n)^2 M_\phi $. Now, $ \big\langle  \mathcal{S}_{1}^{k,n}  \mathcal{S}_{\phi}^{k,n} z^m, z^{m^\prime}\big\rangle =  \big\langle (\mathcal{V}_k^n)^2 M_\phi z^m, z^{m^\prime}  \big\rangle =  \big\langle   \mathcal{S}_{\phi}^{k,n} z^m, z^{-k m^\prime} \big\rangle  = a_{k^2m^\prime - m }$. So, $a_{k^2m^\prime - m } = 0 \;\;  \forall m, m^\prime \in \mathbb{Z}^n $ which implies $ a_m = 0  \;\;\forall m \in \mathbb{Z}^n $. Hence, $\phi(z)= 0 $. The proof of the converse part is obvious.\\[1mm]
\noindent The proof of the second part $ (b)$ immediately follows from Theorem \ref{thm9}.
\end{proof}
\begin{theorem}
If $ \phi, \psi \in L^\infty(\mathbb{T}^n)$ then, the given statements are equivalent
\begin{enumerate}
\item[(a)] $ \mathcal{S}_\phi^{k,n} $ and  $ \mathcal{S}_\psi^{k,n} $ commute.
\item[(b)]  $ \mathcal{S}_\phi^{k,n} $ and  $ \mathcal{S}_\psi^{k,n} $ essentially commute.
\item[(c)] $ \phi(e_{-k}(z)) \psi = \psi(e_{-k}(z))\phi $
\end{enumerate}
\end{theorem}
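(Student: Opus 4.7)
The plan is to reduce all three conditions to a single statement about $\mathcal{S}_1^{k,n}\mathcal{S}_h^{k,n}$, where I set
\[
h(z) := \phi(z^{-k})\psi(z) - \psi(z^{-k})\phi(z),
\]
so that condition (c) is exactly the assertion $h = 0$ (recalling $e_{-k}(z) = z^{-k}$). The key identity, already established in the computation (\ref{eqn7}) of the preceding theorem, is
\[
\mathcal{S}_\phi^{k,n}\mathcal{S}_\psi^{k,n} - \mathcal{S}_\psi^{k,n}\mathcal{S}_\phi^{k,n} \;=\; \mathcal{S}_1^{k,n}\mathcal{S}_h^{k,n}.
\]
Every claim about the commutator therefore becomes a claim about this single product of $k^{th}$ order slant Hankel operators, and both Theorem \ref{thm4} and Theorem \ref{thm9} give us exactly the tools needed.

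For the implication (a) $\Leftrightarrow$ (c), I would just cite the preceding theorem (proved via Theorem \ref{thm3}): the commutator vanishes precisely when $h = 0$, which is (c). The implication (a) $\Rightarrow$ (b) is immediate since the zero operator is compact.

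The main content is (b) $\Rightarrow$ (c). Assuming essential commutation, the operator $\mathcal{S}_1^{k,n}\mathcal{S}_h^{k,n}$ is compact by the displayed identity. Applying Theorem \ref{thm9} to the pair $(\phi',\psi') = (1,h)$ yields that this compactness is equivalent to $1(z^{-k})\cdot h(z) = 0$, i.e.\ $h \equiv 0$, which is exactly (c). This closes the cycle (c) $\Rightarrow$ (a) $\Rightarrow$ (b) $\Rightarrow$ (c).

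I do not expect a serious obstacle: the heavy lifting was done in Theorem \ref{thm4} (characterising when $\mathcal{S}_\phi^{k,n}\mathcal{S}_\psi^{k,n}$ vanishes) and in Theorem \ref{thm9} (characterising when such a product is compact). The only point to verify carefully is that essentially commute is interpreted as compact commutator, so that Theorem \ref{thm9} applies with $\phi$ replaced by $1$ and $\psi$ replaced by $h$; once that is granted, the proof is a short two-line deduction for each direction.
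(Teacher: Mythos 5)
The paper states this theorem with no proof at all, so there is nothing to compare against; your argument is correct and is evidently the intended one, since it uses precisely the machinery the surrounding results set up. The identity (\ref{eqn7}) reduces all three conditions to statements about the single operator $\mathcal{S}_1^{k,n}\mathcal{S}_h^{k,n}$ with $h(z)=\phi(z^{-k})\psi(z)-\psi(z^{-k})\phi(z)$, and then Theorem \ref{thm3} (or Theorem \ref{thm4} with $\phi=1$) handles the vanishing of the commutator while Theorem \ref{thm9} --- equivalently, part (b) of the theorem immediately preceding this one, which states verbatim that $\mathcal{S}_1^{k,n}\mathcal{S}_h^{k,n}$ is compact iff $h=0$ --- handles the essential commutation, closing the cycle exactly as you describe.
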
 
\begin{theorem}
A $k^{th}$ order slant Hankel operator of level $n$,  $ \mathcal{S}_\phi^{k,n} $ cannot be isometric.
\end{theorem}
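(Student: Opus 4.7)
The plan is to reduce the statement to the already-established Theorem~\ref{thm6}, which says that the only hyponormal $k^{th}$ order slant Hankel operator of level $n$ is the zero operator. The bridge I will use is the elementary Hilbert space fact that every isometry is automatically hyponormal, which turns ``isometric'' into a condition already completely classified in the paper.

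First I would argue by contradiction: suppose $\mathcal{S}_\phi^{k,n}$ is an isometry, so $(\mathcal{S}_\phi^{k,n})^{\ast}\mathcal{S}_\phi^{k,n}=I$. Then $P:=\mathcal{S}_\phi^{k,n}(\mathcal{S}_\phi^{k,n})^{\ast}$ is a self-adjoint projection, because $P^{2}=\mathcal{S}_\phi^{k,n}\bigl((\mathcal{S}_\phi^{k,n})^{\ast}\mathcal{S}_\phi^{k,n}\bigr)(\mathcal{S}_\phi^{k,n})^{\ast}=\mathcal{S}_\phi^{k,n}(\mathcal{S}_\phi^{k,n})^{\ast}=P$. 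Every projection satisfies $P\le I$, so $\mathcal{S}_\phi^{k,n}(\mathcal{S}_\phi^{k,n})^{\ast}\le (\mathcal{S}_\phi^{k,n})^{\ast}\mathcal{S}_\phi^{k,n}$; equivalently, $\|(\mathcal{S}_\phi^{k,n})^{\ast}f\|\le\|\mathcal{S}_\phi^{k,n}f\|$ for every $f\in L^{2}(\mathbb{T}^{n})$, which is exactly the hyponormality condition used in Theorem~\ref{thm6}.

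Invoking Theorem~\ref{thm6} then forces $\phi=0$, hence $\mathcal{S}_\phi^{k,n}=0$. But the zero operator satisfies $\|0\cdot e_{0}\|=0\ne 1=\|e_{0}\|$, so it is not an isometry on the nontrivial Hilbert space $L^{2}(\mathbb{T}^{n})$; this contradicts the isometric hypothesis and finishes the argument.

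I do not anticipate a real obstacle here. The only small thing to spell out carefully is the one-line passage from ``isometric'' to ``hyponormal'', since this implication is not formally recorded elsewhere in the paper; a self-contained alternative would be to test the isometry condition directly on a suitable basis element and compare with the computation in the proof of Theorem~\ref{thm6}, but the hyponormality route is clearly the cleanest.
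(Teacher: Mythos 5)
Your argument is correct, and it takes a genuinely different route from the paper. You reduce the statement to Theorem \ref{thm6} via the standard operator-theoretic fact that every isometry is hyponormal: if $(\mathcal{S}_\phi^{k,n})^{\ast}\mathcal{S}_\phi^{k,n}=I$, then $\mathcal{S}_\phi^{k,n}(\mathcal{S}_\phi^{k,n})^{\ast}$ is an orthogonal projection, hence $\le I=(\mathcal{S}_\phi^{k,n})^{\ast}\mathcal{S}_\phi^{k,n}$, which is exactly the inequality $\|(\mathcal{S}_\phi^{k,n})^{\ast}f\|\le\|\mathcal{S}_\phi^{k,n}f\|$ used as the definition of hyponormality in the proof of Theorem \ref{thm6}; that theorem forces $\phi=0$, and the zero operator is plainly not an isometry. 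The paper instead argues directly: it tests the isometry condition on each basis vector $e_m$ to get $\sum_{r}|a_{-kr-m}|^2=1$ for every $m$, sums over the $k^n$ residue classes modulo $k$ to conclude $\|\phi\|_2^2=k^n$, computes $\mathcal{S}_\phi^{k,n}(\mathcal{S}_\phi^{k,n})^{\ast}=M_{\Upsilon}$ with $\Upsilon=\mathcal{V}_k^n(|\phi|^2)$, and derives a norm contradiction using $k\ge 2$. Your reduction is shorter, avoids the paper's somewhat delicate (and not entirely transparent) norm bookkeeping, and localizes all the hard analysis in one place, namely the already-proved hyponormality classification; the cost is that your proof is only as solid as Theorem \ref{thm6}, whereas the paper's direct computation is logically independent of it and additionally extracts the quantitative information $\|\phi\|_2^2=k^n$. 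The one step you flagged --- isometric implies hyponormal --- is indeed the only thing to spell out, and your justification of it via the projection $P=\mathcal{S}_\phi^{k,n}(\mathcal{S}_\phi^{k,n})^{\ast}$ is complete.
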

\begin{proof}
Let $\phi(z)= \displaystyle \sum_{r \in \mathbb{Z}^n} a_r z^r $. If possible, let us assume that $ \mathcal{S}_\phi^{k,n} $ is isometric. Then, $ \big\Vert \mathcal{S}_\phi^{k,n}  e_m(z)\big \Vert = \big\Vert e_m(z)\big\Vert  = 1 \;\;\forall m \in \mathbb{Z}^n $ which implies $ \bigg\Vert  \displaystyle \sum_{r \in \mathbb{Z}^n} a_{-kr -m } z^r \bigg\Vert^2 = 1$  that is $   \displaystyle \sum_{r \in \mathbb{Z}^n} \vert a_{-kr -m }\vert^2 = 1 \;\; \forall m, r \in \mathbb{Z}^n $. We have, 
\begin{flalign}\label{eqn10}
\mathcal{S}_\phi^{k,n} (\mathcal{S}_\phi^{k,n})^* e_p(z) = & \; \mathcal{V}_k^n M_\phi M_{\overline{\phi}}(\mathcal{V}_k^n)^* e_p(z) \cr
= & \; \mathcal{V}_k^n M_\phi M_{\overline{\phi}} \; e_{-kp}(z)  \cr = & \; \mathcal{V}_k^n \bigg( \displaystyle \sum_{r \in \mathbb{Z}^n} a_r z^r \sum_{r \in \mathbb{Z}^n} \overline{a}_r z^{-r} e_{-kp}(z)\bigg)   \cr 
= & \;\mathcal{V}_k^n  \bigg(\sum_{r \in \mathbb{Z}^n} \vert a_r \vert^2 \bigg) \; e_p(z) \cr
= & \; M_\Upsilon e_p(z), \; {\rm where} \; \Upsilon = \mathcal{V}_k^n(\vert \phi \vert^2).  
\end{flalign}
Now, $ \Vert \phi \Vert^2 = \bigg\Vert \displaystyle \sum_{r \in \mathbb{Z}^n} a_r z^r \bigg\Vert^2 = \sum_{r \in \mathbb{Z}^n} \vert a_r \vert^2  $\\
$= \displaystyle\sum_{i_j = 0, 1 \leq j \leq n} ^ {k-1}~~ \displaystyle\sum_{(r_1,r_2, \cdots r_n)\in \mathbb{Z}^n} \vert a_{kr_1 - i_1,k r_2 - i_2, \cdots, kr_n - i_n } \vert  = k^n $. So, 
\begin{equation}\label{eqn11}
\big\Vert \vert \phi\vert^2 \big\Vert = \big\Vert \mathcal{S}_\phi^{k,n} (\overline{\phi}) \big\Vert = \big\Vert \phi\big\Vert = k^{\frac{n}{2}}.
\end{equation}
As we know that $\big\Vert  \mathcal{S}_\phi^{k,n}  \big\Vert = \big\Vert \mathcal{V}_k^n \big\Vert_\infty ^{\frac{1}{2}}$ and by assumption $ \mathcal{S}_\phi^{k,n}  $ is an isometry. Therefore, $ \big\Vert \mathcal{V}_k^n(\vert \phi \vert^2)\big\Vert_\infty = 1 $. Since, $ k \geq 2 $ and $ n \geq 1 $, the above calculations justify that $ \big\Vert \mathcal{V}_k^n(\vert \phi \vert^2)\big\Vert_\infty \leq \mathcal{V}_k^n(\vert \phi \vert^2)\big\Vert_2 \big\Vert $, which is a contradiction. Hence, $ \mathcal{S}_\phi^{k,n}  $ cannot be isometric.
\end{proof}

\end{document}